\newtheorem{theorem}{Theorem}
\newtheorem{lemma}[theorem]{Lemma}
\newtheorem{proposition}[theorem]{Proposition}
\newtheorem{remark}{Remark}
\newenvironment{proof}[1][Proof]{\textbf{#1.} }{\ \rule{0.5em}{0.5em}}
\def\G{\Gamma}
\def\g{\gamma}
\begin{document}

\title{A census of genus two 3-manifolds up to 42 coloured tetrahedra
\thanks{Work performed under the auspicies of the G.N.S.A.G.A.
of I.N.D.A.M. (Istituto Nazionale di Alta Matematica) and
financially supported by MiUR of Italy (project ``Propriet\`a
geometriche delle variet\`a reali e complesse'') and by the
University of Modena and Reggio Emilia (project ``Modelli discreti
di strutture algebriche e geometriche'').}}

\author{Paola Bandieri,\ Paola Cristofori \ and Carlo Gagliardi}

\maketitle

\begin{abstract}
We improve and extend to the non-orientable case a recent result
of Karabas, Ma\-licki and Nedela concerning the classification of
all orientable prime $3$-manifolds of Heegaard genus two,
triangulated with at most $42$ coloured tetrahedra.
\\
\\
\noindent\textit{2000 Mathematics Subject Classification:} 57N10,
57Q15, 57M15.\\\textit{ Keywords:} genus two 3-manifold;
crystallization; edge-coloured graph.
\end{abstract}

\section{Introduction}

In \cite{KMN}, Karabas, Malicki and Nedela show that there exist
exactly $78$ non-homeomorphic, closed, orientable, prime
$3$-manifolds with Heegaard genus two, admitting a coloured
triangulation with at most $42$ tetrahedra.

Each manifold $M$ is identified by a suitable $6$-tuple of
non-negative integers, representing a minimal crystallization --
hence a minimal coloured triangulation -- of $M$. From such a
$6$-tuple, a presentation of the fundamental group and of the
first homology group of $M$ are easily obtained (see also
\cite{KMN1}).

The result is performed first by generating all ``admissible"
$6$-tuples, en\-co\-ding genus two crystallizations up to order
$42$ (\cite{KN}) and then by using combinatorics, topology and
group
theory to subdivide them into $78$ equivalence classes (after 
excluding $\mathbb S^3$, $\mathbb S^1 \times \mathbb S^2$, lens
spaces and connected sums), which are proved to be in one-to-one
correspondence with the homeomorphism classes of the represented
$3$-manifolds.

In the present paper, we improve the previous result and extend it
to the non-orientable case, by using a computer program which
generates directly all (bipartite and non-bipartite) $3$-manifold
crystallizations of a given order.

The procedure, restricted to graphs of regular genus two and order
at most $42$, produces as output $703$ bipartite crystallizations
(thus representing orientable $3$-manifolds) and $82$
non-bipartite crystallizations (thus representing non-orientable
$3$-manifolds).

A classification algorithm based on the concept of ``dipole
moves", implemented in a C++ program, enables us to partition the
graphs previously generated into $175$ classes in the bipartite
case and into $9$ classes in the non-bipartite case, which are
proved to represent non-homeomorphic (orientable and
non-orientable) $3$-manifolds, of Heegaard genus $\leq 2$ (with
the given bound for the number of vertices of the
crystallizations).

In the orientable case, $97$ classes represent genus one
$3$-manifolds or connected sums. The remaining $78$ classes,
representing prime, genus two $3$-manifolds, are listed in Table
$2$, by increasing number of vertices of the crystallizations,
 where for each class a geometric description, the
representative $6$-tuple and the position in
Karabas-Malicki-Nedela's list are presented. This completes the
identification of all still unknown manifolds of \cite{KMN}.

In the non-orientable case, two classes represent $\mathbb
S^1\tilde\times\mathbb S^2$ (the twisted $2$-sphere bundle over
$\mathbb S^1$, of Heegaard genus one) and a connected sum
respectively. Hence, there exist exactly $7$ prime, non-orientable
$3$-manifolds with genus two, all listed and identified in Table
$3$, again by increasing number of vertices of the
crystallizations.

\section{Preliminaries}

Throughout this paper, spaces and maps will be in 
PL-category, for which we refer to \cite{RS}. Manifolds will  be
closed and connected, when not otherwise specified. The symbol
$\cong$ will mean \textit{PL-homeomorphism}.

Crystallization theory provides an useful tool for representing
manifolds by means of edge-coloured graphs (\cite{P}). In this
section, we limit ourselves to give definitions and results which
are necessary to understand our work. For an exhaustive look on
the theory, we refer to \cite{BCG} and \cite{FGG}. For the basic
facts about graph theory see \cite{GY}.

An \textit{$(n+1)$-coloured graph} is a pair $(\G,\gamma)$, where
$\G$ is a graph, regular of degree $n+1$, and $\gamma :
E(\G)\to\Delta_n=\{0,\ldots,n\}$ a map which is injective on each
pair of adjacent edges of $\G$. In the following, we will often
write $\G$ instead of $(\G,\gamma)$.

For each $B\subseteq\Delta_n$, we call $B$\textit{-residues} of
$(\G,\gamma)$ the connected components of the coloured graph
$\G_B=(V(\G),\gamma^{-1}(B))$; given an integer
$m\in\{1,\ldots,n\}$, we call $m$\textit{-residue} of $\G$ each
$B$-residue of $\G$ with $\# B=m$. Moreover, for each
$i\in\Delta_n$, we set $\hat{\imath}=\Delta_n\setminus\{i\}$.

An isomorphism $\phi : \G\to\G'$ is called a \textit{coloured
isomorphism} between the $(n+1)$-coloured graphs $(\G,\gamma)$ and
$(\G',\gamma')$ if there exists a permutation $\varphi$ of
$\Delta_n$ such that $\varphi\circ\gamma=\gamma'\circ\phi$.

A coloured $n$-complex is a pseudocomplex $K$ of dimension $n$
with a labelling of its vertices by $\Delta_n=\{0,\ldots,n\}$,
which is injective on the vertex-set of each simplex of $K$.

For each $(n+1)$-coloured graph $\G$, a coloured $n$-complex
$K(\G)$ can be obtained by the following rules:
\begin{itemize}
\item [-] for each vertex $v$ of $\G$, take an $n$-simplex $\sigma (v)$ and label
its vertices by $\Delta_n$;
\item [-] if $v$ and $w$ are vertices of $\G$ joined by an $c$-coloured
edge ($c\in\Delta_n$), then identify the $(n-1)$-faces of $\sigma
(v)$ and $\sigma (w)$ opposite to the vertices labelled $c$.
\end{itemize}

If $M$ is a manifold of dimension $n$ and $\G$ an $(n+1)$-coloured
graph such that $|K(\G)|\cong M$ (here $|K(\G)|$ denotes the space
of the complex $K(\G)$), then, following Lins (\cite{L2}), we say
that $\Gamma$ is a \textit{gem} (graph-encoded-manifold)
representing $M.$

If, for each $i\in\Delta_n$, $\Gamma_{\hat{\imath}}$ is connected
(equivalently if the corresponding coloured triangulation
$K(\Gamma)$ has exactly one vertex labelled $i$, for each
$i\in\Delta_n$), then  $\Gamma$ and $K(\Gamma)$ are called
\textit{contracted}; furthermore, a contracted gem representing an
$n$-manifold $M$ is called a \textit{crystallization} of $M$. Note
that $M$ is orientable iff $\G$ is  bipartite.

Given two $(n+1)$-coloured graphs $\G^\prime$ and
$\G^{\prime\prime}$ representing the manifolds $M^\prime$ and
$M^{\prime\prime}$ respectively, we can easily construct an
$(n+1)$-coloured graph $\G=\Gamma^\prime\#\Gamma^{\prime\prime}$
representing $M^\prime\# M^{\prime\prime}$. Let $x$ be a vertex of
$\G^\prime$ and $y$ a vertex of $\G^{\prime\prime}$; then we
obtain $\G$ by removing $x$ from $\G^\prime$ , $y$ from
$\G^{\prime\prime}$ and by gluing the ``hanging" edges according
to their colours (see \cite{FGG}).

It is well-known that, if both manifolds are orientable (i.e.
$\G^\prime$ and $\G^{\prime\prime}$ are both bipartite) and do not
admit orientation-reversing automorphisms, there exist two
non-homeomorphic connected sums. In this case, by the above
construction, we can obtain two $(n+1)$-coloured graphs, each
corresponding to fix $x$ in $V(\G^\prime)$ and choose $y$ in one
of the two different bipartition classes of
$V(\G^{\prime\prime})$.

Let $\G$ be an $(n+1)$-coloured graph representing an $n$-manifold
$M$ and suppose that $\G$ satisfies the following condition (which
in the following will be referred to as ``condition (\#)''):

\begin{itemize}
\item [(\#)] $\G$ has $n+1$ edges
$\{e_0,\ldots,e_n\}$, one for each colour $i \in\Delta_n$, such
that $\G - \{e_0,\ldots,e_n\}$ splits into two connected
components.
\end{itemize}

\noindent Then it is easy to reverse the connected sum
construction and, starting from $\G$, obtain two $(n+1)$-coloured
graphs $\G^\prime$ and $\G^{\prime\prime}$, representing two
$n$-manifolds $M^\prime$ and $M^{\prime\prime}$ respectively, such
that $\G = \G^\prime \# \G^{\prime\prime}$. Hence $\G$ represents
$M^\prime \# M^{\prime\prime}$ (more precisely, $\G$ represents
one of the two possibly non-homeomorphic connected sums).

Coloured graphs appearing in our catalogues are always represented
by a numerical ``string", which is called the \textit{code}; it
describes completely the combinatorial structure of the coloured
graph (see \cite{CG} for definition and description of the related
\textit{rooted numbering algorithm}) and, since two
$(n+1)$-coloured graphs are colour-isomorphic iff they have the
same code (\cite{CG}), by re\-pre\-sen\-ting each coloured graph
by its code, we can easily reduce any catalogue of
crystallizations to one containing only non-colour-isomorphic
graphs. Moreover the code is easy to be handled by computer and
starting from a code $c$ there is a standard way to contruct a
coloured graph having $c$ as its code.

Main tools of our work are combinatorial moves (\textit{dipole
moves}) which tran\-sform a gem representing an $n$-manifold into
another (usually non-colour-isomorphic) gem, representing the same
manifold.

If $x,y$ are two  vertices of a $(n+1)$-coloured graph $(\G,\g )$
joined by $k$ edges $\{e_1,\ldots, e_k\}$ with $\g(e_h)=i_h$, for
$h = 1, \ldots, k$, then we call $\theta=\{x,y\}$ a
\textit{k-dipole} or a \textit{dipole of type k} in $\G$,
\textit{involving colours} $i_1,\ldots, i_k$, iff $x$ and $y$
belong to different $(\Delta_n - \{ i_1,\ldots, i_k\})$-residues
of $\G$.

In this case a new $(n+1)$-coloured graph $(\G^\prime,\g^\prime )$
can be obtained from $\G$ by deleting $x,y$ and all their incident
edges and joining, for each $i\in\Delta_n-\{i_1,\ldots ,i_k\}$,
the vertex $i$-adjacent to $x$ to the vertex $i$-adjacent to $y$;
$(\G^\prime,\g^\prime )$ is said to be obtained from $(\G,\g )$ by
\textit{deleting the $k$-dipole} $\theta$. Conversely $(\G,\g )$
is said to be obtained from $(\G^\prime,\g^\prime)$ by
\textit{adding the $k$-dipole}.

From now on, we restrict ourselves to $3$-manifolds; in this
context, we can introduce further moves.

Let $(\G,\g)$ be a $4$--coloured graph. Let $\Theta$ be a subgraph
of $\G$ formed by a $\{i,j\}$-coloured cycle $C$ of length $m+1$
and a $\{h,k\}$-coloured cycle $C^\prime$ of length $n+1$, having
only one common vertex $x_0$ and such that $\{i,j,h,k\} =
\{0,1,2,3\}$. Then $\Theta$ is called an \textit{(m,n)--dipole}.

If $x_1,\,x_m,\,y_1,\,y_n$ are the vertices respectively
$i,\,j,\,h,\,k$-adjacent to $x_0$, we define the $4$-coloured
graph $(\G^\prime,\g^\prime)$
 \textit{obtained from} $\G$ \textit{by cancelling the
(m,n)--dipole}, in the following way:
\begin{itemize}
\item [1)] delete $\Theta$ from $\G$ and consider the
product $\Xi$ of the subgraphs $C-\{x_0\}$ and $C^\prime-\{x_0\}$;
\par\noindent
\item [2)] for each $s,s^\prime\in \{1, \dots, n\}$
(resp. for each $r,r^\prime\in \{1, \dots, m\}$), let $e$ be the
edge joining $y_s$ and $y_{s^\prime}$ (resp. $x_r$ and
$x_{r^\prime}$) in $\G$. If $\g (e) = c \in \{0,1,2,3\}$, then,
for each $t\in \{1, \dots, m\}$ (resp. for each $t\in \{1, \dots,
n\}$), join the vertices $(x_t, y_s)$ and $(x_t, y_{s^\prime})$
(resp. $(x_r, y_t)$ and $(x_{r^\prime}, y_t)$) by a $c$-coloured
edge in $\Xi$;
\par\noindent
\item [3)] for all $r\in\{1, \dots, m\},\,s\in\{1, \dots, n\}$, if a vertex $z$
of $\Gamma -\Theta$ is joined to $y_s$ (resp. $x_r$) by a $i$ or
$j$ (resp. $h$ or $k$)--coloured edge in $\G$, then $z$ is joined
to $(x_1,\,y_s),\,(x_m,\,y_s)$ (resp. $(x_r,\,y_1),\,
(x_r,\,y_n)$) by a $i$ or $j$ (resp. $h$ or $k$)--coloured edge in
$\G'$.
\end{itemize}

Cancellation or addition of a $(m,n)$-dipole is called a
\textit{generalized dipole move}.

If two $i$-coloured edges $e,f\in E(\Gamma)$ belong to the same
$\{i,j\}$-coloured cycle and to the same $\{i,k\}$-coloured cycle
of $\Gamma$, with  $j,k\in \Delta_3-\{i\}$ (resp. to the same
$\{i,h\}$-coloured cycle of $\Gamma$, for each $h\in
\Delta_3-\{i\}$), then $(e,f)$ is called a {\it $\rho_2$-pair}
(resp. a {\it $\rho_3$-pair}). Usually, we will write \textit{
$\rho$-pair} instead of $\rho_2$-pair or $\rho_3$-pair.

A graph $\G$ is a {\it rigid crystallization} of a 3-manifold
$M^3$ if it is a crystallization of $M^3$ and contains no
$\rho$-pairs. A non-rigid crystallization $\G$ of a 3-manifold $M$
can be always transformed into a rigid one by \textit{switching
$\rho$-pairs} (see \cite{L2}) and cancelling the dipoles which
might be created in the process.

The effects of cancellation/addition of a dipole, a generalized
dipole and of switching of a $\rho$-pair are described in the
following Proposition.

\begin{proposition}  \ \label{moves_vari}{\rm (\cite{FG}, \cite{L2})}
\begin{itemize}
\item [(i)]
If $\G$ and  $\G^{\prime}$ are 4-coloured graphs representing two
3-manifolds $M$ and $M^\prime $ respectively, and $\G^{\prime}$ is
obtained from $\G$ by a dipole move or a generalized dipole move
or by switching a $\rho_2$-pair, then $M\cong M^\prime$.
\item [(ii)] Let $\G$ be a  4-coloured
graph representing a 3-manifold $M$, containing a $\rho_3$-pair.
If $\G^\prime$, obtained from $\G$ by switching it, is connected,
then it represents a 3-manifold $M^\prime$, such that $M =
M^\prime \# H$, where either $H=\mathbb S^1\times\mathbb S^2$ or
$H=\mathbb S^1\tilde\times\mathbb S^2$.
\end{itemize}
\end{proposition}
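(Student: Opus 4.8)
The plan is to pass from the coloured graphs to their associated pseudocomplexes and to realize each graph move as a controlled modification of the triangulation, reading off the effect on the underlying PL-space. Throughout I would use the standard dictionary between residues of $(\G,\g)$ and faces of $K(\G)$: for $B\subseteq\Delta_n$ with $\#B=m$, the $B$-residues of $\G$ correspond bijectively to the $(n-m)$-simplices of $K(\G)$ whose vertex-labels form the set $\Delta_n\setminus B$. In particular, for $B=\hat\imath$ the $\hat\imath$-residues correspond to the vertices labelled $i$, which is exactly the contractedness condition, and this same dictionary controls each of the moves below. All of this applies in particular to the $4$-coloured ($n=3$) setting of the Proposition.

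For the dipole part of (i), let $\theta=\{x,y\}$ be a $k$-dipole involving the colours $B=\{i_1,\dots,i_k\}$. In $K(\G)$ the simplices $\sigma(x)$ and $\sigma(y)$ are glued along the faces opposite to the vertices coloured by $B$, while the condition that $x$ and $y$ lie in distinct $\hat B$-residues says precisely that the two $(k-1)$-faces of $\sigma(x)$ and $\sigma(y)$ coloured by $B$ are distinct simplices of $K(\G)$. I would show that $\sigma(x)\cup\sigma(y)$ is then a PL $n$-ball meeting the rest of $K(\G)$ in a subcomplex of its boundary across which it collapses; cancelling $\theta$ realizes this elementary collapse, so $|K(\G')|\cong|K(\G)|$ and $M\cong M'$. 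This is the argument of Ferri--Gagliardi, which I would reconstruct in this language.

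The generalized $(m,n)$-dipole move and the switching of a $\rho_2$-pair are the technical heart, and I would handle them by exhibiting each as a finite composition of the elementary PL operations already available. For the $(m,n)$-dipole the product construction in steps 1)--3) amounts, at the level of $K(\G)$, to replacing a join-type neighbourhood of the configuration $\Theta$ by a product-type neighbourhood; since both are PL $3$-balls with the same boundary identification, the swap is a PL homeomorphism of neighbourhoods and hence $M\cong M'$. For the $\rho_2$-pair $(e,f)$, the hypothesis that $e,f$ lie in a common $\{i,j\}$-cycle and a common $\{i,k\}$-cycle forces the local picture to be that of a sequence of dipole insertions and cancellations, so switching preserves the underlying space by the already-proved dipole case.

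For (ii) the new feature is a genuine change of topology, and here lies the main obstacle. A $\rho_3$-pair $(e,f)$ of $i$-coloured edges lies in a common $\{i,h\}$-cycle for every $h\in\Delta_3\setminus\{i\}$, which is far stronger than the $\rho_2$ incidence. Through the dictionary above this says that the two $2$-simplices of $K(\G)$ carried by $e$ and $f$ have the same three edges and the same three vertices, so together they form an embedded $2$-sphere $S$ in $M$. The plan is to show that switching $(e,f)$ amounts to cutting $M$ along $S$ and regluing, i.e. to surgery on $S$, and that the connectivity of $\G'$ is equivalent to $S$ being non-separating. Surgery on a non-separating $2$-sphere in a $3$-manifold removes precisely a summand that is an $\mathbb S^2$-bundle over $\mathbb S^1$, so $M=M'\#H$. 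I would then distinguish the two cases by orientability: tracking the bipartition of $\G$ across the switch determines whether the regluing is orientation-preserving, giving $H=\mathbb S^1\times\mathbb S^2$, or orientation-reversing, giving $H=\mathbb S^1\tilde\times\mathbb S^2$. The crux, which I expect to be the delicate part, is to verify rigorously that connectivity of $\G'$ forces $S$ to be non-separating (so that no summand other than the two sphere-bundles can occur) and to match the graph-theoretic switch with the topological surgery; this is exactly Lins' analysis in \cite{L2}, which I would reproduce.
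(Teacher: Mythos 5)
The paper offers no proof of this Proposition at all: it is stated as a quoted result, attributed to \cite{FG} (dipole and generalized dipole moves) and \cite{L2} ($\rho$-pair switching), and the text never goes beyond the citation. So the only meaningful comparison is with those sources, and against them your sketch is faithful in its architecture: the residue--simplex dictionary, the reading of a dipole cancellation as the replacement of the ball $\sigma(x)\cup\sigma(y)$, and, for (ii), the observation that a $\rho_3$-pair is dual to two triangles of $K(\G)$ sharing all three edges and all three vertices, hence forming a $2$-sphere $S\subset M$, with connectivity of $\G^\prime$ corresponding to $S$ being non-separating, surgery on a non-separating sphere splitting off an $\mathbb S^2$-bundle over $\mathbb S^1$, and bipartiteness deciding between $\mathbb S^1\times\mathbb S^2$ and $\mathbb S^1\tilde\times\mathbb S^2$. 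That is exactly how Ferri--Gagliardi's and Lins' arguments run.

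Two caveats if this is meant to become a self-contained proof rather than a reconstruction. The $\rho_2$-pair step is the one genuine soft spot: the claim that the hypothesis ``forces the local picture to be a sequence of dipole insertions and cancellations'' is asserted, not argued, and if the justification you have in mind is that both graphs represent the same manifold and gems of the same manifold are dipole-equivalent (the main theorem of \cite{FG}), the argument is circular, since that equivalence presupposes knowing that the switch preserves the manifold. A non-circular route, parallel to your own treatment of (ii), is available: a $\rho_2$-pair is dual to two triangles of $K(\G)$ sharing exactly two edges (and all three vertices), so their union is a disk, and the switch amounts to cutting and regluing along that disk, which cannot change the PL type because homeomorphisms of a disk fixing its boundary are isotopic to the identity. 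Likewise, your ``join-type versus product-type neighbourhood'' swap for the $(m,n)$-dipole still needs the verification that the graph $\Xi$ built in steps 1)--3) really triangulates a ball inducing the same boundary identification. Finally, you explicitly defer the crux of (ii) to \cite{L2}; that is legitimate here, since the Proposition is itself a quoted result, but it means your proposal --- exactly like the paper --- is ultimately a pointer to the literature rather than an independent proof.
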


Note that, if $\G$ is a crystallization of $M$, then  $\G^\prime$
is always connected.

\begin{remark}\emph{In the case of dipole moves, statement (i) of the above Proposition is
actually stronger. In fact the main theorem of \cite{FG} proves
that $M$ ad $M^\prime$ are homeomorphic iff $\G$ and $\G^\prime$
are obtained from each other by a sequence of dipole moves.}
\end{remark}

\noindent A different set of moves is defined in \cite{LM}

\begin{remark}\emph{Each closed connected 3-manifold admits a rigid crystallization (see
\cite{CA1} for a detailed proof). Moreover, an easy consequence of
Proposition \ref{moves_vari} proves that every closed connected
3-manifold $M$ different from $M^\prime\# H$ admits a rigid
crystallization of minimal order (see \cite{CA1}). Hence, since we
are interested mainly in prime manifolds, in the generation and
analysis of our catalogues we will restrict ourselves to rigid
crystallizations.}
\end{remark}

Note that, each orientable genus two 3-manifolds is the 2-fold
covering of $\mathbb S^3$, branched over a knot or link $L$
(\cite{BH}). The construction described in \cite{F} allows to
obtain a crystallization $\Gamma$ of  $M$, starting from a
$3$-bridge presentation of  $L$. As a consequence of the
construction, $\Gamma$ belongs to a particular class of
crystallizations, called {\it 2-symmetric} in \cite{CGr}, which
can be codified by suitable 6-tuples (called {\it admissible}) of
non-negative integers (\cite{CA4}). Hence, admissible 6-tuples are
a representation tool for orientable genus two 3-manifolds.

In  \cite{GMN}, the authors describe an equivalence relation on
the set of admissible 6-tuples, whose equivalence classes consist
only of 6-tuples representing 2-symmetric crystallizations of the
same manifold. In \cite{K} a catalogue was presented of the
representatives of the equivalence classes of admissible 6-tuples,
whose associated 2-symmetric crystallizations have at most 42
vertices. Subsequently in \cite{KMN} the catalogue was reduced to
6-tuples all representing distinct manifolds.

\section{Seifert manifolds and coloured triangulations}\label{Burton-Seifert}

Let $ M =(S, (\alpha_1, \beta_1), \ldots, (\alpha_n, \beta_n))$ be
the Seifert fibered space whose orbit space is the surface $S$ and
having $n$ exceptional fibers, with non-normalized parameters
$(\alpha_i, \beta_i),\ i = 1, \ldots, n.$

Let us consider a triple of integers $(p,q,r)$ such that:

\begin{itemize}
\item [(i)] $(|p|,|q|) = 1$;
\item [(ii)] $p+q+r=0$;
\end{itemize}

In \cite{B1} and \cite{B2}, the author describes a triangulation
of the solid torus, called {\it layered solid torus of type
(p,q,r)} and denoted by $LST(p,q,r)$, which is used to construct
triangulations of Seifert manifolds.

We recall briefly the main steps of the construction, which is
done recursively. First of all, we point out that, at each step,
we obtain a layered solid torus (from now on, LST for short)
having exactly two $2$-simplexes on the boundary, which will be
called its {\it boundary faces}. Moreover, it has exactly three
boundary edges with a labelling by means of integers satisfying
conditions $(i)-(ii)$. The labelling is defined by recursion, too.

The main point of the procedure is the possibility of performing a
\textit{layering} on an edge $e^\prime$ of a layered solid torus
$LST(p,q,r)$ in the following way:

\noindent suppose that $e^\prime$ is labelled $i\in\{p,q,r\}$,
then a new tetrahedron is considered and two adjacent faces of it,
say $F$ and $F^\prime$, are identified with the boundary faces of
$LST(p,q,r)$ so that the common edge of $F$ and $F^\prime$
coincides with the edge $e^\prime$.  Let $f$ and $g$ be the
boundary edges of $LST(p,q,r)$ labelled $j$ and $k$ respectively
($j,k\in\{p,q,r\}-\{i\}$). Then the new boundary edge identified
with $f$ (resp. $g$) inherits the label $j$ (resp. $-k$). The
obtained complex is the layered solid torus whose set of related
integers is $\{j,-k,k-j\}$.

\begin{remark}\emph{The integers $p,q$ are actually the intersection numbers of the
boundary of a meridinal disk of  $LST(p,q,r)$, which is a simple
oriented curve on the boundary torus $T$ of $LST(p,q,r)$, with the
 basis of $\pi_1(T)$ formed by the (suitably oriented) edges labelled $p$ and $q$.
It is not difficult to see that $LST(-p,-q,-r)$ is the same
triangulation of the solid torus and describes the same curves
with reversed orientations. Since for our aims we don't need to
distinguish the two layered solid tori, from now on we suppose
that two of the elements of the set $\{p,q,r\}$ are positive.
Moreover, each permutation of the set $\{p,q,r\}$ doesn't change
the related LST; therefore, in the following construction we need
only to specify the set of integers we are working on, without
imposing an ordering.}
\end{remark}

In order to construct the LST with set of parameters
$\{p,q,-p-q\}$, we perform the following procedure:

\begin{itemize}
\item [-] the initial step is the LST whose parameters are
$\{1,2,-3\}$: it is obtained from the tetrahedron, with labelled
edges, in Figure 1, by identifying the ``back" faces 
according to the arrows.

\begin{figure}[h]
\begin{center}
\scalebox{0.3}{\includegraphics{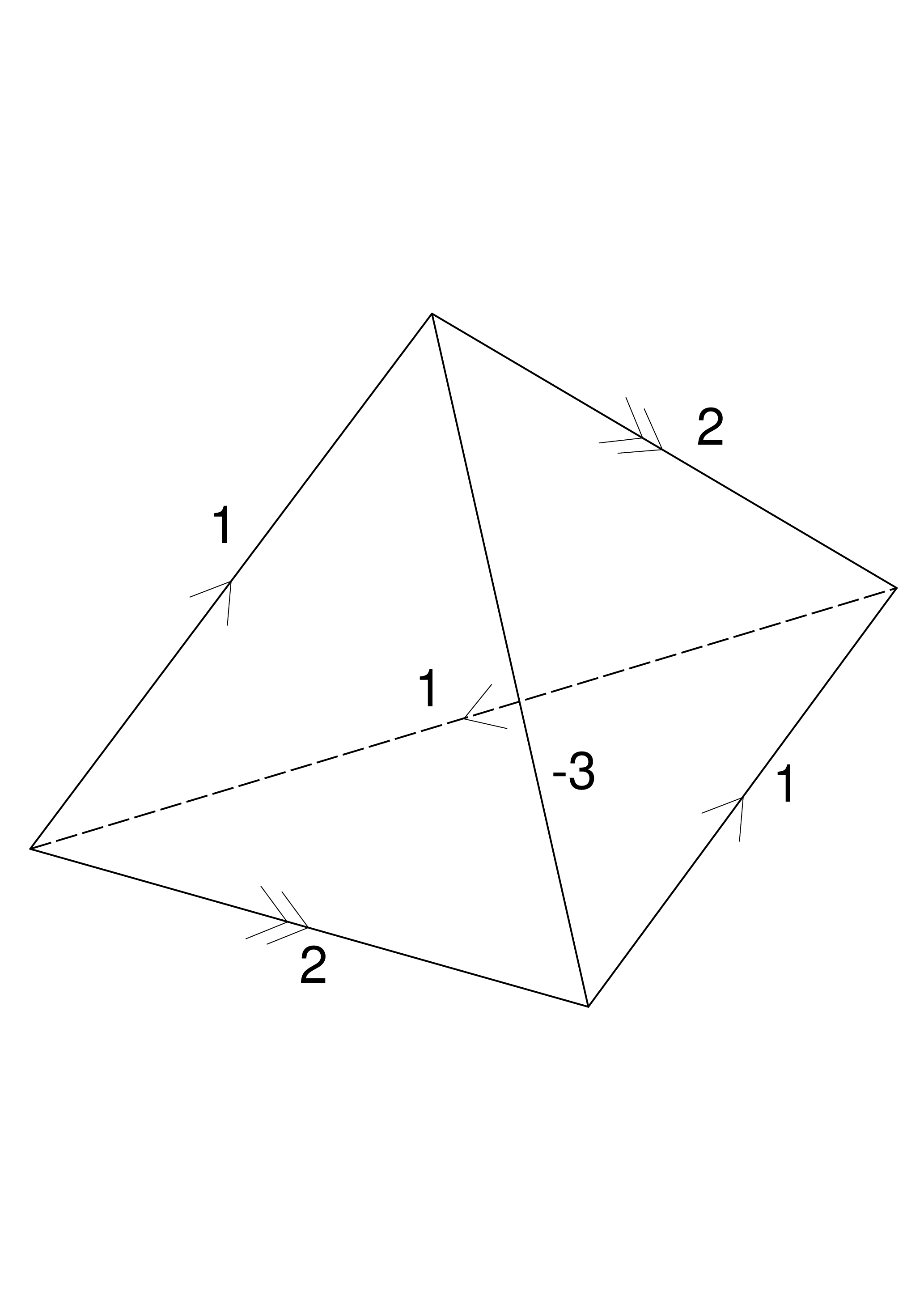}}
\end{center}
\caption{}
\end{figure}

\bigskip

\item [-] the LST with parameters $\{i,j,k\}$ ($0<j<k$) is
obtained from the LST with parameters $\{j,-k,k-j\}$, by a
layering on the edge labelled $k-j$.
\end{itemize}

For more details about the construction and its geometric
meanings, see \cite{B1} and \cite{B2}.

We are now ready to describe how to construct a coloured
triangulation of the Seifert fiber space
 $M=(\mathbb S^2,(\alpha_1,\beta_1),(\alpha_2,\beta_2),(\alpha_3,\beta_3))$.

Let us fix a point $A\in\mathbb S^2$ and let $\mathbb D_1,\
\mathbb D_2,\ \mathbb D_3$ be 2-disks in $\mathbb S^2$ such that
$\mathbb D_1\cap\mathbb D_2\cap\mathbb D_3=\partial\mathbb
D_1\cap\partial\mathbb D_2\cap\partial\mathbb D_3=\{A\}$. The
pseudocomplex $P$ of Figure 2 is a planar realization of $\mathbb
S^2\setminus\bigcup_{i=1}^3 int\,{D_i}$.

\begin{figure}[h]
\begin{center}
\scalebox{0.3}{\includegraphics{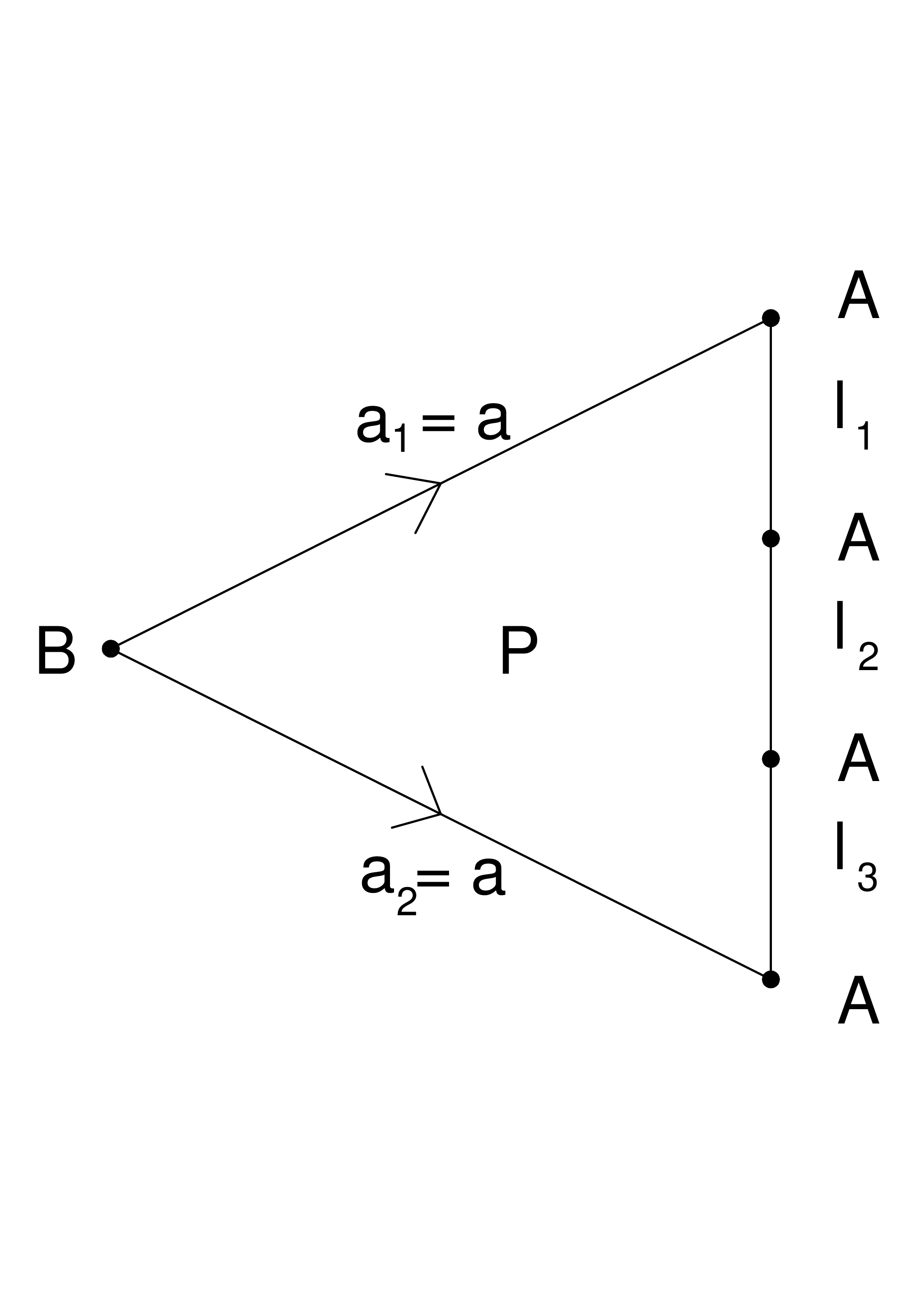}}
\end{center}
\caption{}
\end{figure}

Figure 3 shows the boundary surface of $P\times I$, with the
identifications of the faces $P\times\{0\}$ and $\{a_1\}\times I$
with $P\times\{1\}$ and $\{a_2\}\times I$ respectively. Moreover,
we marked the subdivision of the faces $\{l_i\}\times I$
($i=1,2,3$) which will be necessary for the construction of a
triangulation of $M$.

We point out that for each $i=1,2,3$, the boundary face
$\{l_i\}\times I$ is the usual representation of a torus by a
square. Therefore, after the identifications of $P\times\{0\}$ and
$\{a_1\}\times I$ with $P\times\{1\}$ and $\{a_2\}\times I$
respectively, we obtain $\mathbb S^1\times\mathbb S^2$ with three
solid tori removed.

The last step is the identification, for each $i=1,2,3$, of the
two triangles of $\{l_i\}\times I$ (see Figure 3) with the
boundary faces of $LST(\alpha_i,\theta_i,\sigma_i)$, where either
$\theta_i = \beta_i$ or $\sigma_i = - \beta_i$, so as to identify
the edges $\{A\}\times I$ (resp. $l_i\times\{0\}$ and
$l_i\times\{1\}$) with the edges labelled $\alpha_i$ (resp. either
$\theta_i$ or $\sigma_i$). In this way we obtain a triangulation
$K$ of $M$.

Note that, in order to define precisely the gluing of the three
layered solid tori, it is necessary to specify not only the sets
of parameters but also their ordering. Hence the notation with
triples $(\alpha_i,\theta_i,\sigma_i),\ \ i=1,2,3$.

\begin{figure}[h]
\begin{center}
\scalebox{0.6}{\includegraphics{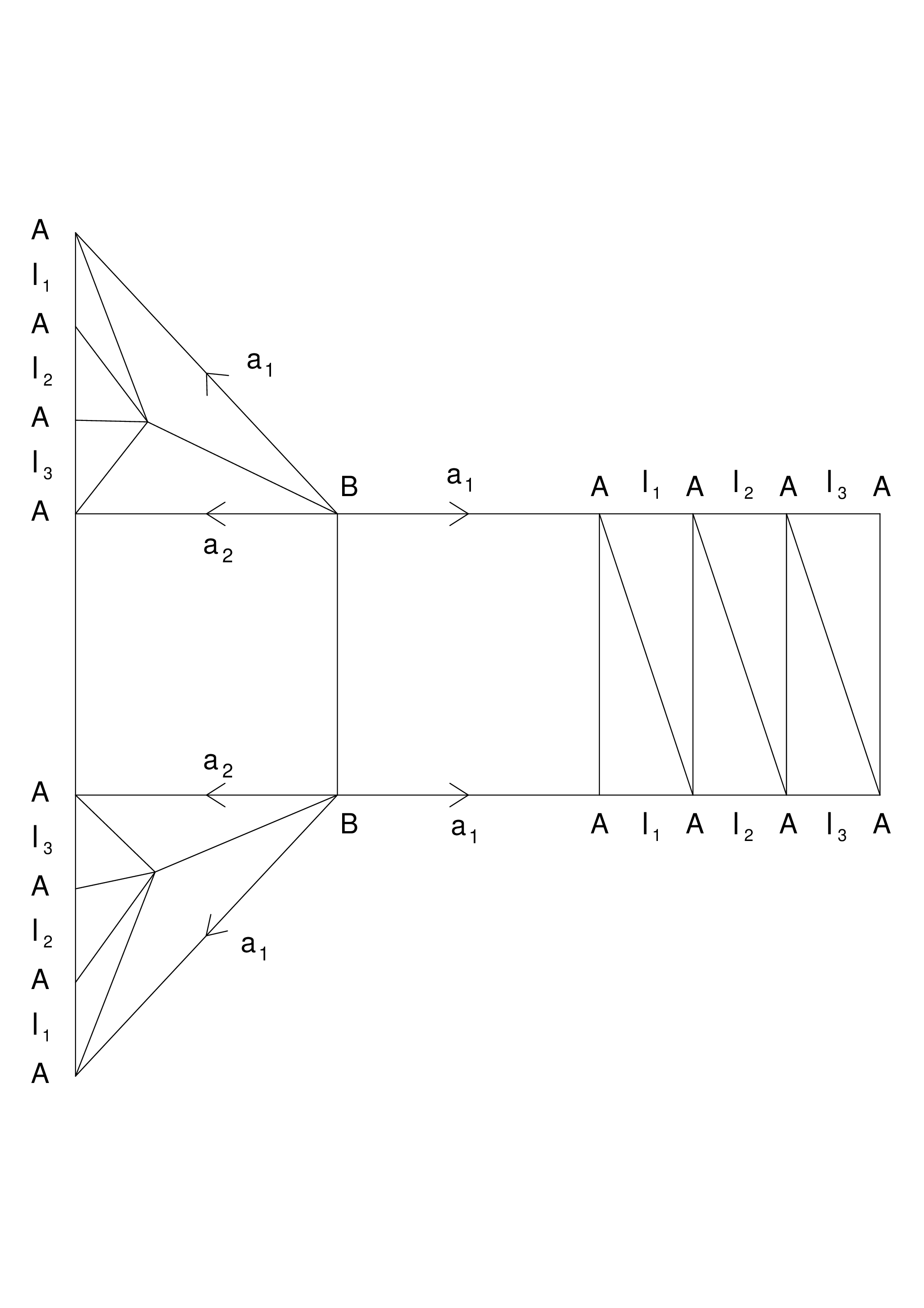}}
\end{center}
\caption{}
\end{figure}

Finally, in order to obtain a coloured triangulation, we consider
the first barycentric subdivision $K^\prime$ of $K$ and colour $h$
($h\in\Delta_3$) the barycenters of $h$-dimensional cells of
$K^\prime$.

\section{Generating and analysing genus two crystallizations}

The \textit{regular genus} of an $n$-manifold is a combinatorial
invariant which extends to dimension $n$ the classical concepts of
genus of a surface and Heegaard genus of a 3-manifold. For the
precise definition of the invariant, we refer to \cite{BCG}, but
to suit our present aim it is sufficient to recall that, if $\G$
is a crystallization of a 3-manifold, then the regular genus of
$\G$ is the integer
$$\rho(\G)=min\{g_{01}(\G), g_{02}(\G), g_{03}(\G)\}-1$$
\noindent where for each $i=1,2,3,\ \ g_{0i}(\G)$ is the number of
$\{0,i\}$-residues of $\G$.

Therefore, the regular genus of a 3-manifold $M$ can be defined as
the minimum $\rho(\G)$ among all crystallizations of $M$.

Combinatorial encoding of closed 3-manifolds by crystallizations,
together with the restriction to rigid ones, allows us to
construct essential catalogues of all contracted triangulations of
closed 3-manifolds up to a certain number of vertices. By using
the codes, we can easily avoid isomorphic graphs, too.

The general algorithm, which is fully described in \cite{CA1} and
\cite{CC}, runs as follows:

\begin{itemize}
\item [--] The starting point is the set $\mathcal S^{(2p)}$ of all (connected)
rigid\footnote{here ``rigid" means without $\rho_2$-pairs} and
planar 3-coloured graphs with $2p$ vertices. The construction
makes use of Lins's results in \cite{L1} and is performed by
induction on $p$. More precisely every rigid and planar 3-coloured
graph with $2p$ vertices is obtained from an analogous one with
$2p-2$ vertices, by a suitable operation (\textit{antifusion}),
with the possible exception of the ''prism'' with $p$-gonal base
(with $p$ even);

\item [--] to each element of $\mathcal S^{(2p)}$ 3-coloured edges are added in all possible ways so as to produce
rigid crystallizations of 3-manifolds. By checking bipartition,
crystallizations of orientable and non-orientable manifolds can be
separated.
\end{itemize}

In this way we obtain the catalogues of all non-isomorphic rigid
bipartite and non-bipartite crystallizations with $2p$ vertices.

It is possible to modify the general algorithm in order to
construct only crystallizations having a fixed regular genus. In
particular, we modified Lins's construction in order to obtain the
set $\tilde{\mathcal S}^{(2p)}$ of all rigid 3-coloured graphs
$\tilde{\Sigma}$  with $2p$ vertices representing $\mathbb S^2$
and such that $g_{ij}(\tilde{\Sigma})=3$, for at least one pair of
distinct colours $i,j\in\{0,1,2\}$ and $g_{hk}(\tilde{\Sigma})\geq
3$ for the remaining pairs. Since Lins's procedure is inductive on
the number of vertices and each step increases the number of
bicoloured cycles of a given pair of colours by at most one, it
was sufficient, at each step, to perform the required
transformations only on graphs having at most two
$\{i,j\}$-residues for at least one pair $i,j\in\{0,1,2\}$ and
finally to eliminate the resulting graphs with $2p$ vertices which
satisfy the same property.

By adding $p$ 3-coloured edges to each element of $\tilde{\mathcal
S}^{(2p)}$ in all possible ways so as not to destroy planarity of
the $\hat j$-residues ($j\in\{0,1,2\}$) and rigidity, and by
eliminating the resulting graphs $\G$ which are not contracted, we
obtained the catalogues ${\mathcal C}^{(2p)}_2$ and $\tilde
{\mathcal C}^{(2p)}_2$ of all rigid bipartite and non-bipartite,
respectively, non-isomorphic crystallizations with $2p$ vertices
having regular genus two.

The restriction about the genus allows us to obtain a reduction of
time in the generation procedure and, consequently, we could
obtain catalogues for higher number of vertices than in the
general case where no genus bounds are imposed. More precisely, we
generated these catalogues up to 42 vertices.

The above algorithm was implemented in a C++ program, whose output
data are presented in Table 1 according to the number of vertices.
We point out that there are no rigid genus two crystallizations
with less than 14 vertices.

\bigskip

\centerline{\begin{tabular}{|c|| c | c | c | c | c | c | c | c | c
| c | c | c | c | c | c|}
 \hline
 {\bf 2p } &  14 & 16 & 18 & 20 & 22 & 24 & 26 & 28 & 30 & 32 & 34 & 36 & 38 & 40 & 42\\
  \hline  \hline  \ & \ & \ & \ & \ & \ & \ & \ & \ & \ & \ & \ & \ & \ & \ & \ \\
 \hfill {\bf  $\#\mathcal C^{(2p)}_2$} \hfill & 1 & 2 & 4 & 6 & 8 & 14 & 18 & 23 & 38 & 47 & 58 & 79 & 118 & 128 & 159\\
 \hline
 \hfill \ & \ & \ & \ & \ & \ & \ & \ & \ & \ & \ & \ & \ & \ & \
 &  \ \\
 {\bf $\#\tilde {\mathcal C}^{(2p)}_2$} \hfill & 1 & 1 & 1 & 1 & 2 & 2 & 3 & 2 & 6 & 7 & 9 & 7 & 12 & 12 & 16\\
 \hline
 \end{tabular}}

\bigskip
\centerline{\textbf{Table 1:  Genus two rigid crystallizations up
to 42 vertices.}}

In \cite{CC} for the orientable case and \cite{BCrG} and
\cite{CA1} for the non-orientable case, catalogues of all rigid
crystallizations with at most 30 vertices have been analysed and
the represented manifolds identified.

In this paper we follow the same line with respect to the
catalogues ${\mathcal C}^{(2p)}_2$ and $\tilde {\mathcal
C}^{(2p)}_2$ with $p\leq 21$, i.e. we manipulate crystallizations
through generalized dipole moves and subdivide them into classes
according to the equivalence defined by the moves.

In fact, the procedure is completely general and requires only a
given list $X$ of rigid crystallizations and a fixed set $\mathcal
S$ of sequences (called \textit{admissible}) of generalized
dipoles moves, dipole moves and $\rho$-pairs switching, such that
each element of $\mathcal S$ transforms rigid crystallizations
into rigid crystallizations (see \cite{CC} for details). For each
$\Gamma\in X$ and for each $\epsilon\in\mathcal S$, we denote by
$\theta_\epsilon (\G)$ the (rigid) crystallization obtained by
applying the sequence $\epsilon$ to $\G$.

Note that, by Proposition \ref{moves_vari}, if $\G$ represents a
3-manifold $M$ then $\theta_\epsilon (\G)$ re\-pre\-sents the
3-manifold $M^\prime$, such that $M=M^\prime \#_r H$ where $\#_r
H$ denotes the connected sum of $r$ copies either of the
orientable or of the non-orientable $\mathbb S^2$-bundle over
$\mathbb S^1$; more precisely $H=\mathbb S^1\times\mathbb S^2$ iff
$\G$ and $\theta_\epsilon (\G)$ are both bipartite or both
non-bipartite. Obviously $r$ is the number of $\rho_3$-pairs,
which have been eventually deleted while applying the sequence
$\epsilon$ (usually we denote this number by $h_\epsilon (\G)$).

As a consequence, by using the elements of $\mathcal S$, it is
possible to subdivide $X$ into disjoint classes $\{c_1,\ldots
,c_s\}$ such that, for each $i\in\{1,\ldots ,s\}$ and for each $\G
,\G^\prime\in c_i$, there exist two integers $h,k\geq 0$ and a
3-manifold $M$ such that $|K(\G)|=M\#_h H$ and
$|K(\G^\prime)|=M\#_k H$.

More precisely, for each $\G\in X$, we define the \textit{class}
of $\G$ as the set
\begin{align*}
cl(\G )=\{&\G^\prime\in X\ |\ \exists\,\epsilon,\epsilon^\prime\in
\mathcal S\ \text{\ s.t.\ }\\
&\theta_\epsilon(\G ) \text{\ and\ }
\theta_{\epsilon^\prime}(\G^\prime) \text{\ have the same code}\}.
\end{align*}
In \cite{CC} and \cite{BCrG} it is shown how to construct the set
$cl(\G)$, for each $\G\in X$, and also how to compute a
non-negative number denoted by $h(\G)$; it defines a function $h\
:\ X\to\mathbb N\cup\{0\}$ inducing a natural subdivision of each
class $c_i\ (i=1,\ldots,s\ )$ into subclasses
$c_{i,k}=\{\G^\prime\in c_i\ |\ h(\G^\prime)=k\}$ such that all
crystallizations of a given subclass represent the same manifold.

Moreover, for each $\G\in X$ such that $cl(\G)=c_i$, if the
elements of $c_{i,0}$ represents a 3-manifold $M$, then $\G$
represents the 3-manifold $M^\prime=M\#_{h(\G)}H$, with $H$ as
above.

For the precise description of the algorithm yielding $cl(\G)$ and
$h(\G)$, we refer to the already cited papers. Moreover, we point
out that the choice of the set $\mathcal S$, which is used in our
implementation, is exactly the same as described in \cite{BCrG}.

Note that, by the above definitions and results, if known
catalogues of crystallizations are inserted in $X$, all classes of
$X$ containing at least one known crystallization are completely
identified, with respect to the manifolds represented by all their
subclasses.

Moreover, condition (\#) can be checked to recognize connected
sums.

According to these ideas, the classification algorithm has been
implemented in the C++ program {\it $\G$-class}\,\footnote{
developed by M.R. Casali and P. Cristofori
and 
available at WEB page\\
http://cdm.unimo.it/home/matematica/casali.mariarita/CATALOGUES.htm
where a detailed description of the program can be found, too.}:
its input data are a list $X$ of rigid crystallizations and the
informations about already known crystallizations in $X$ (possibly
none), i.e the identification of their represented manifolds
through suitable ``names"; the output is the list of classes of
$X$, together with their representatives and, if possible, their
names.

In the following sections we present the results of $\G$-class
applied to catalogues ${\bf C}^{42}_2=\bigcup_{p=1}^{21}{\mathcal
C}^{(2p)}_2$ and $\widetilde{\bf
C}^{42}_2=\bigcup_{p=1}^{21}\widetilde {\mathcal C}^{(2p)}_2$.

\section{Genus two orientable 3-manifolds}

The catalogue $\textbf C^{42}_2$ is partitioned by the program
$\G$-class into 175 classes, 93 of which are known through former
results in \cite{L2} and \cite{CC}; moreover the program
recognized 23 connected sums, which didn't appear in the cited
papers.

In particular, we have the following result.

\begin{lemma}\label{somme or} For each {\rm $(\Gamma,\gamma)\in\textbf C^{(42)}_2$}
satisfying condition (\#) with summands
$(\Gamma^{(1)},\gamma^{(1)})$ and $(\Gamma^{(2)},\gamma^{(2)})$,
then
\begin{itemize}
\item  [-] if at least one of $|K(\Gamma^{(1)})|,\ |K(\Gamma^{(2)})|$ admits
orientation-reversing self-ho\-meo\-morphisms, then $cl(\Gamma)$
is the only class representing a connected sum with summands
$|K(\Gamma^{(1)})|$ and $|K(\Gamma^{(2)})|$;
\item  [-] otherwise there are exactly two classes $\ cl(\Gamma),\
cl(\Gamma^\prime)\ $ representing a connected sum with summands
$|K(\Gamma^{(1)})|$ and $|K(\Gamma^{(2)})|$; in this case
$|K(\Gamma)|\ncong |K(\Gamma^\prime)|$.
\end{itemize}
\end{lemma}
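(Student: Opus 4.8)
The plan is to analyze the connected sum construction combinatorially and translate the topological dichotomy (existence or non-existence of orientation-reversing self-homeomorphisms of the summands) into a statement about the classes produced by $\G$-class. First I would recall the construction reviewed in Section 2: given $(\Gamma,\gamma)\in\textbf{C}^{(42)}_2$ satisfying condition (\#), reversing the connected-sum construction yields the two summands $(\Gamma^{(1)},\gamma^{(1)})$ and $(\Gamma^{(2)},\gamma^{(2)})$ representing $M^{(1)}=|K(\Gamma^{(1)})|$ and $M^{(2)}=|K(\Gamma^{(2)})|$, with $|K(\Gamma)|\cong M^{(1)}\# M^{(2)}$. The key combinatorial fact already stated in the excerpt is that when both summands are orientable (equivalently both $\Gamma^{(i)}$ bipartite) and neither admits an orientation-reversing automorphism, there are precisely two inequivalent graphs $\Gamma^\prime\#\Gamma^{\prime\prime}$ obtained by choosing the vertex $y\in V(\Gamma^{(2)})$ in one or the other bipartition class; these realize the two (possibly distinct) connected sums $M^{(1)}\# M^{(2)}$ corresponding to the two relative orientations.

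For the first bullet, I would argue that if at least one summand, say $M^{(1)}$, admits an orientation-reversing self-homeomorphism, then the two a-priori-different gluings (choosing $y$ in either bipartition class of $V(\Gamma^{(2)})$) represent \emph{homeomorphic} connected sums, since the orientation-reversing homeomorphism of $M^{(1)}$ can be used to identify the two relative orientations. Hence there is a single homeomorphism type $M^{(1)}\# M^{(2)}$, and I would invoke Proposition \ref{moves_vari} together with the structure of $\G$-class to conclude that all crystallizations in $\textbf{C}^{(42)}_2$ realizing this connected sum fall into a single class $cl(\Gamma)$: since any two crystallizations of the same $3$-manifold are related by dipole moves (Remark following Proposition \ref{moves_vari}), and the admissible sequences in $\mathcal{S}$ capture exactly this equivalence up to $H$-summands, the corresponding graphs share the property that some $\theta_\epsilon$-image has a common code.

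For the second bullet, when neither summand admits an orientation-reversing self-homeomorphism, I would show that the two choices yield genuinely non-homeomorphic manifolds $|K(\Gamma)|\ncong|K(\Gamma^\prime)|$, hence two distinct classes $cl(\Gamma)$ and $cl(\Gamma^\prime)$. The topological input is the classical fact (cited in the excerpt as ``well-known'') that under these hypotheses the two connected sums $M^{(1)}\#M^{(2)}$ are non-homeomorphic; I would then argue that $\G$-class, whose classes consist of crystallizations representing the same manifold up to $H$-summands, cannot merge the two, so they form separate classes. I would also verify that both graphs indeed lie in the catalogue $\textbf{C}^{(42)}_2$ (regular genus two, rigid, order at most $42$), which follows because the connected-sum construction on the fixed summands produces graphs with the same order and genus regardless of the bipartition-class choice for $y$.

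The main obstacle I expect is the second bullet: precisely establishing $|K(\Gamma)|\ncong|K(\Gamma^\prime)|$ rather than merely asserting the two gluings \emph{may} differ. This requires pinning down that the absence of orientation-reversing self-homeomorphisms on \emph{both} summands forces the two relative orientations to give distinct connected sums, and then confirming that the equivalence generated by $\mathcal{S}$ (dipole and generalized dipole moves, $\rho$-pair switchings) respects this distinction, i.e. that no admissible sequence converts one graph into the other up to code. The orientability bookkeeping---tracking bipartition classes through the moves and ensuring that only $H=\mathbb{S}^1\times\mathbb{S}^2$ summands (not the twisted bundle) can arise in the bipartite setting---is the delicate part, and I would lean on the precise statement in Proposition \ref{moves_vari}(ii) and the identity $|K(\Gamma)|=M\#_{h(\Gamma)}H$ from Section 4 to control it.
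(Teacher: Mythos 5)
Your proposal contains a genuine gap: you try to derive the class counts (``only one class'' in the first bullet, ``exactly two'' in the second) purely from theory, by asserting that ``the admissible sequences in $\mathcal S$ capture exactly this equivalence up to $H$-summands,'' so that two crystallizations of the same manifold necessarily share a class. This is not a theorem. The Ferri--Gagliardi result (the Remark after Proposition \ref{moves_vari}) guarantees that two crystallizations of the same manifold are connected by \emph{some} finite sequence of dipole moves, but the classes $cl(\Gamma)$ are defined via a \emph{fixed} set $\mathcal S$ of admissible sequences, through the condition that $\theta_\epsilon(\Gamma)$ and $\theta_{\epsilon'}(\Gamma')$ have equal codes for some $\epsilon,\epsilon'\in\mathcal S$. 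A priori the partition into classes is strictly finer than the partition by homeomorphism type (up to $H$-summands): nothing forces the connecting sequence of moves to be realized within $\mathcal S$. What theory does give (via Proposition \ref{moves_vari} and uniqueness of prime decomposition) is only the easy direction, namely that non-homeomorphic manifolds cannot share a class. This is exactly why the paper's own proof of both class counts is computational: ``Our program results show that \dots there is only one class'' and, respectively, that ``there are exactly two classes.'' These statements are output facts about the catalogue $\textbf C^{(42)}_2$, not consequences of the dipole-move theorem, and your argument cannot replace them.

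The part of your proposal that does match the paper is the construction underlying the final claim $|K(\Gamma)|\ncong|K(\Gamma')|$: form $\Gamma''$ by choosing the vertex $v'_2\in V(\Gamma^{(2)})$ in the other bipartition class, observe that $\Gamma''$ has the same order $\le 42$, that it lies in the catalogue, and that it represents the other of the two non-homeomorphic connected sums, so it must populate the second class found by the program. However, even here you overclaim: you assert that the resulting graph has regular genus two ``regardless of the bipartition-class choice for $y$,'' whereas the paper only states that \emph{a suitable choice} of $v'_2$ yields regular genus two (and hence membership in $\textbf C^{(42)}_2$). The genus of the connected-sum graph is not obviously independent of the chosen vertices, so this step needs the weaker, per-choice formulation the paper uses.
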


\begin{proof}
Our program results show that, whenever at least one of
$|K(\Gamma^{(1)})|$,\ $|K(\Gamma^{(2)})|$ admits
orientation-reversing self-homeomorphisms, there is only one class
as in the statement.

On the other hand, in the case of neither $\ |K(\Gamma^{(1)})|\ $
nor $\ |K(\Gamma^{(2)})|\ $ admitting orientation-reversing
self-homeomorphisms, there are exactly two classes $cl(\Gamma ),\
cl(\Gamma^\prime)\ $ representing a connected sum with
$|K(\Gamma^{(1)})|$ and $|K(\Gamma^{(2)})|$ as summands.

Furthermore, we point out that, if $\Gamma $ is the connected sum
of the graphs $\ (\Gamma_1,\gamma_1)\ $ and $\
(\Gamma_2,\gamma_2)\ $ with respect to the vertices $v_1\in
V(\Gamma^{(1)})$ and $v_2\in V(\Gamma^{(2)})$, then, by choosing
in $\ V(\Gamma^{(2)})\ $ a vertex $v^\prime_2$ belonging to a
different bipartition class from $v_2$, we can construct a
connected sum $\Gamma^{\prime\prime}$ of $\ (\Gamma_1,\gamma_1)\ $
and $\ (\Gamma_2,\gamma_2)\ $ (with
$\#V(\Gamma^{\prime})=\#V(\Gamma^{\prime\prime})\le 42$) such that
$|K(\Gamma)|\ncong |K(\Gamma^{\prime\prime})|$. Moreover, by a
suitable choice of $v^\prime_2$, we can obtain that $\
\Gamma^{\prime\prime}\ $ has regular genus two. Actually $\
\Gamma\ $ and $\ \Gamma^{\prime\prime}\ $ represent the two
non-homeomorphic connected sums of $\ |K(\Gamma^{(1)})|\ $ and $\
|K(\Gamma^{(2)})|\ $.

Since $\Gamma^{\prime\prime}$ must belong to $\textbf C^{(42)}_2$
too, we have necessarily $\Gamma^{\prime\prime}\in
cl(\Gamma^\prime)$.\end{proof}

\begin{remark}\emph{The identification of the summands $\ |K(\Gamma^{(1)})|\ $
and $\ |K(\Gamma^{(2)})|\ $ involved in the above lemma, has been
done directly by the program for a large number of classes; namely
those having $\ \Gamma^{(1)}\ $ and $\ \Gamma^{(2)}\ $ with less
than 32 vertices. All other summands had cyclic fundamental
groups. Therefore, we constructed crystallizations of genus one of
lens spaces with the required groups and inserted them in the list
handled by $\G$-class. The program identified all unknown summands
as lens spaces in our list.}
\end{remark}

Further 29 classes of $\textbf C^{42}_2$, having cyclic
fundamental groups, were recognized by applying the same procedure
as described in the above remark. They all turned out to represent
lens spaces, which don't appear in catalogues $\mathcal C^{(2p)},\
\ 1\leq p\leq 15$ or among the manifolds of \cite{BGR}.
Furthermore, as we will see in the following, no lens space is
left among the still unidentified manifolds.

The main consequence of the output results of $\G$-class and the
above lemma, is that a bjiective correspondence exists between the
already identified subclasses and the represented manifolds. As we
will prove, the same holds for the whole catalogue.

The classes of $\textbf C^{42}_2$, which have not been identified
by $\G$-class, are 30. The pro\-blem of their recognition will be
discussed and wholly solved in the following sections.

A comparison of codes yields that there is a bijective
correspondence between our still unknown classes of
crystallizations of $\textbf C^{42}_2$ and 30 of the 78 6-tuples
which in \cite{KMN} are proved to represent all distinct prime
orientable genus two 3-manifolds admitting a coloured
triangulation with $2p$ tetrahedra, with $p\leq 21$ and having
acyclic fundamental groups. Among these classes three are already
identified by the results of \cite{BGR} up to 34 tetrahedra.

The 48 manifolds which have been already identified by $\G$-class,
by means of their admitting at least one coloured triangulation
with less than 32 tetrahedra (see \cite{CC}), are mostly Seifert
spaces with base $\mathbb S^2$ and three exceptional fibers (for
the complete list see Appendix A).

The remaining unidentified manifolds fall into two cases: those
with finite and those with infinite fundamental group. As pointed
out in \cite{KMN}, all finite groups are Milnor's groups,
therefore the correspoding manifolds are elliptic and completely
known. In Appendix A, besides explicitly writing down the groups,
we specified also the Seifert structure of these manifolds.

The pro\-blem of identifying the manifolds with infinite
fundamental group is left open by the authors of \cite{KMN}. We
are solving it by manipulating group presentations and by
constructing coloured triangulations of Seifert spaces with base
$\mathbb S^2$ and three exceptional fibres. In fact all manifolds
under examination turn out to belong to this family.

Our starting point is the following Proposition, which enables us
to recognize all groups in Karabas-Malicky-Nedela's list (in the
following ``KMN-list" for short) corresponding to our still
unknown manifolds, as fundamental groups of Seifert spaces of the
above described type.

\medskip

\begin{proposition}\label{inf_fund_group}\ \

\begin{itemize}
\item [(i)] the group $G(\alpha_1,\alpha_2,\alpha_3)$ defined by the presentation

$$<a,b\ /\ a^{\alpha_1}=b^{\alpha_2}=(ab)^{\alpha_3}>,\quad \alpha_i>0,\ \text{for each } i=1,2$$

is isomorphic to the fundamental group of the Seifert manifold

$$(\mathbb S^2,(\alpha_1,1),(\alpha_2,1),(|\alpha_3|,\varepsilon)),\ \text{\ where\ \ } \varepsilon =-\alpha_3/|\alpha_3|;$$

\item [(ii)] the group

$$G^\prime(\alpha_1,\alpha_2,\alpha_3)=<a,b\ /\ a^{\alpha_3}=b^{\alpha_2}=(a^{-\varepsilon}b^\varepsilon)^{\alpha_1}>,$$
with $\quad \alpha_i>0,\ \text{for each } i=1,2,3,\ \
\varepsilon=\pm 1$, is isomorphic to the fundamental group of the
Seifert manifold

$$(\mathbb S^2,(\alpha_1,1),(\alpha_2,-\varepsilon),(\alpha_3,\varepsilon)).$$

\item [(iii)] the group

$$G^{\prime\prime}=<a,b\ /\ a^{5}=b^{3}=(ab^{-2})^{-3}>$$

is isomorphic to the fundamental group of the Seifert manifold

$$(\mathbb S^2,(3,1),(3,1),(5,-4)).$$

\end{itemize}
\end{proposition}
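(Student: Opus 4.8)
The plan is to reduce the well-known presentation of a Seifert fundamental group to the two-generator presentations in the statement by Tietze transformations. Recall that for $M=(\mathbb S^2,(\alpha_1,\beta_1),(\alpha_2,\beta_2),(\alpha_3,\beta_3))$ the group $\pi_1(M)$ admits the presentation
$$\langle q_1,q_2,q_3,h\mid [h,q_i]=1,\ q_i^{\alpha_i}h^{\beta_i}=1\ (i=1,2,3),\ q_1q_2q_3=1\rangle,$$
where $h$ is the class of a regular fibre. The idea is to use the long relation to eliminate $q_3=(q_1q_2)^{-1}$ and one power relation to eliminate $h$, obtaining a presentation on two generators. A key observation, used throughout, is that in each target group the common element $z:=a^{\alpha_1}=b^{\alpha_2}=\cdots$ is automatically central: it is a power of $a$, hence commutes with $a$, and a power of $b$, hence commutes with $b$, and $a,b$ generate. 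Thus the relations $[h,q_i]=1$ become consequences of the power relations and need not be imposed separately; $z$ plays the role of $h^{\pm1}$.

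For (i) I would set $a=q_1$, $b=q_2$, so $q_3=(ab)^{-1}$, and eliminate $h=a^{-\alpha_1}$ via $q_1^{\alpha_1}h=1$. The relation $q_2^{\alpha_2}h=1$ then reads $a^{\alpha_1}=b^{\alpha_2}$, while $q_3^{|\alpha_3|}h^{\varepsilon}=1$ becomes $(ab)^{|\alpha_3|}=a^{-\varepsilon\alpha_1}$; distinguishing the two signs of $\alpha_3$ (recall $\varepsilon=-\alpha_3/|\alpha_3|$), this is in both cases exactly $(ab)^{\alpha_3}=a^{\alpha_1}$, yielding $G(\alpha_1,\alpha_2,\alpha_3)$ with $z=h^{-1}$. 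For (ii) the substitution is the same in spirit but the inverses depend on $\varepsilon$: for $\varepsilon=1$ I would take the three fibre elements to be $a^{-1}b$, $b^{-1}$, $a$ (of orders $\alpha_1,\alpha_2,\alpha_3$), which satisfy $(a^{-1}b)\,b^{-1}\,a=1$; for $\varepsilon=-1$ the elements $ab^{-1}$, $b$, $a^{-1}$, with $(ab^{-1})\,b\,a^{-1}=1$. Setting again $z=h^{-1}$ and reading off the exponent of $h$ in each power relation reproduces the parameters $(\alpha_1,1),(\alpha_2,-\varepsilon),(\alpha_3,\varepsilon)$. In both (i) and (ii) the work is pure bookkeeping of signs.

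The hard part will be (iii), where the third parameter $\beta_3=-4$ is not $\pm1$, so a mere renaming does not suffice. Starting from the presentation of $(\mathbb S^2,(3,1),(3,1),(5,-4))$ and eliminating $q_2=q_1^{-1}q_3^{-1}$ and $h=q_1^{-3}$, I obtain the reduced presentation
$$\langle q_1,q_3\mid (q_1^{-1}q_3^{-1})^{3}=q_1^{3},\ q_3^{5}=q_1^{-12}\rangle.$$
I would then apply the \emph{shear} substitution $b=q_1$, $a=q_3h^{-1}=q_3q_1^{3}$ (so that $q_3=ab^{-3}$), chosen precisely so that $a^{5}=h^{-1}$: indeed $(q_3h^{-1})^{5}=q_3^{5}h^{-5}=h^{4}h^{-5}=h^{-1}$ by centrality. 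Writing $z:=h^{-1}=q_1^{3}$, one checks $a^{5}=z$ and $b^{3}=z$ at once, while the crucial point is the computation $ab^{-2}=q_3q_1^{3}q_1^{-2}=q_3q_1$, whence $(ab^{-2})^{-3}=(q_1^{-1}q_3^{-1})^{3}=z$ by the first reduced relation. This gives exactly $G''=\langle a,b\mid a^{5}=b^{3}=(ab^{-2})^{-3}\rangle$.

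Finally I would verify that each substitution is a genuine Tietze transformation by exhibiting the inverse change of generators (for (iii): $q_1=b$, $h=b^{-3}$, $q_3=ab^{-3}$, $q_2=b^{2}a^{-1}$), so that the two presentations define isomorphic groups. The only real obstacle is locating the shear exponent in (iii); once the identity $ab^{-2}=q_3q_1$ is spotted, the third relation collapses onto the single reduced relation, which is exactly why this case must be stated and treated separately from the uniform families (i) and (ii).
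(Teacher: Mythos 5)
Your proposal is correct. For parts (i) and (ii) it coincides with the paper's own proof: the authors use exactly your substitutions ($q_1=a$, $q_2=b$, $q_3=(ab)^{-1}$ in (i); $q_1=a^{-1}b$, $q_2=b^{-1}$, $q_3=a$ for $\varepsilon=1$ in (ii), the other sign being ``analogous''), together with the same centrality observation, which they merely assert as ``easy to see'' and you actually justify (the common power commutes with each of two generating elements). The genuine divergence is in part (iii). The paper works inside $G''$: it sets $q_1=ab^{-2}$, $q_2=b^{-1}$, $q_3=a^{-1}$, $h=(ab^{-2})^{-3}$, derives $q_1q_2q_3=h^{-1}$ (not $=1$) from the auxiliary identity $q_2^2=q_3q_1$, and recognizes the result as the non-normalized presentation of $(\mathbb S^2,(3,1),(3,1),(5,1),(1,-1))=(\mathbb S^2,(3,1),(3,1),(5,-4))$; the awkward parameter $\beta_3=-4$ is thus absorbed by an extra $(1,-1)$ fibre. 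You instead Tietze-reduce the normalized three-fibre presentation to two generators and absorb $\beta_3=-4$ by the shear $a=q_3h^{-1}$, $b=q_1$. Both routes are valid: the paper's is computationally shorter but leans on the flexibility of non-normalized Seifert invariants, while yours is more mechanical, exhibits the inverse substitution explicitly, and would work verbatim for any value of $\beta_3$. One point you should spell out: when you drop the relations $[h,q_i]=1$ in writing the reduced presentation $\langle q_1,q_3\mid (q_1^{-1}q_3^{-1})^3=q_1^3,\ q_3^5=q_1^{-12}\rangle$, you must check that they are consequences of the two remaining relations, since otherwise your later appeal to ``centrality'' would be circular; this is indeed the case, because $q_1^3=(q_1^{-1}q_3^{-1})^3$ commutes with $q_1$ and with $q_1^{-1}q_3^{-1}$, and these two elements generate the group --- that is, your ``key observation'' applied to the intermediate presentation and not only to the target groups. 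With that remark made explicit, your argument is complete.
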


\begin{proof}
\begin{itemize}
\item [(i)] Let us set $q_1=a,\ q_2=b,\ q_3=(ab)^{-1},\ h=(ab)^{-\alpha_3}$, then from the relations of $G$, we have
$$q_1^{\alpha_1}=q_2^{\alpha_2}=h^{-1},\ q_3^{|\alpha_3|}h^\varepsilon=1,\ q_1q_2q_3=1$$
Moreover, it easy to see that, for each $i=1,2,3$, $q_i$ and $h$
commute.

\noindent Therefore $G$ admits the presentation
\begin{eqnarray}\nonumber
<&q_1,\ q_2,\ q_3,\ h\ /\ q_1h=hq_1,\ q_2h=hq_2,\ q_3h=hq_3,\
q_1^{\alpha_1}h=1,\\ \nonumber &q_2^{\alpha_2}h=1,\
q_3^{|\alpha_3|}h^\varepsilon=1,\ q_1q_2q_3=1>
\end{eqnarray}
\noindent which is a well-known presentation of $\pi_1((\mathbb
S^2,(\alpha_1,1),(\alpha_2,1),(|\alpha_3|,\varepsilon)))$ (see
\cite{Or}).

\item [(ii)] Let us consider the case $\varepsilon =1$, then
$$G^\prime(\alpha_1,\alpha_2,\alpha_3)=<a,b\ /\ a^{\alpha_3}=b^{\alpha_2}=(a^{-1}b)^{\alpha_1}>$$
Set $q_1=a^{-1}b,\ q_2=b^{-1},\ q_3=a,\ h=(a^{-1}b)^{-\alpha_1}$

\noindent We have
$$q_1^{\alpha_1}=q_2^{-\alpha_2}=q_3^{\alpha_3}=h^{-1},\ q_1q_2q_3=1$$
\noindent and again each $q_i$ ($i=1,2,3$) commutes with $h$.

\noindent Therefore $G^\prime\cong\pi_1((\mathbb
S^2,(\alpha_1,1),(\alpha_2,-1),(\alpha_3,1)))$. The case
$\varepsilon =-1$ is analogous.

\item [(iii)] If we set $q_1=ab^{-2},\ q_2=b^{-1},\ q_3=a^{-1},\ h=(ab^{-2})^{-3}$, then we have the relations
$$q_1^{3}=q_2^{3}=q_3^{5}=h^{-1},\ q_2^2=q_3q_1$$
\noindent Hence $h^{-1}=q_2^3=q_3q_1q_2$. Since each $q_i$
($i=1,2,3$) commutes with $h$, we can write
$$q_3^{-1}h^{-1}=h^{-1}q_3^{-1}\ \ \Longrightarrow\ \ q_1q_2=q_3q_1q_2q_3^{-1}\ \ \Longrightarrow\ \ q_1q_2q_3=q_3q_2q_1$$

\noindent By comparing the relations we have
$$h^{-1}=q_3q_1q_2=q_1q_2q_3$$
\noindent Therefore
\begin{eqnarray}\nonumber
G^{\prime\prime}\cong<&q_1,\ q_2,\ q_3,\ h\ /\ q_1h=hq_1,\
q_2h=hq_2,\ q_3h=hq_3,\ q_1^{3}h=1,\\ \nonumber &q_2^{3}h=1,\
q_3^{5}h=1,\ q_1q_2q_3=h^{-1}>
\end{eqnarray}
\noindent which is the fundamental group of
$$(\mathbb
S^2,(3,1),(3,1),(5,1),(1,-1))=(\mathbb S^2,(3,1),(3,1),(5,-4)).$$

\end{itemize}
\end{proof}

We point out that all unknown 6-tuples in KMN-list corresponding
to manifolds which were not identified by our former results have
fundamental group admitting a presentation of one of the above
types.

For each of these 6-tuples $f$, by means of the algorithm
described in section \ref{Burton-Seifert}, we constructed a
coloured triangulation $\Gamma(f)$ of the Seifert manifold
$M=(\mathbb
S^2,(\alpha_1,\beta_1),(\alpha_2,\beta_2),(\alpha_3,\beta_3)),$
with parameters $\alpha_i,\beta_i$ ($i=1,2,3$) determined by the
presentation of the fundamental group of $\Gamma(f)$ given in
KMN-list and by Proposition \ref{inf_fund_group}. In the following
table, we present for each of these Seifert manifolds, the triples
of parameters $(\alpha_i,\theta_i,\sigma_i)$ ($i=1,2,3$) of the
three layered solid tori which have been glued to the three
boundary components of $P\times I$, in order to obtain the
required triangulation of $M$.

As a consequence, by cancellation of dipoles and switching of
$\rho$-pairs in the above coloured triangulations, we obtain
easily a list $Y$ of crystallizations of the Seifert manifolds
which could match our unknown classes.

\bigskip
\centerline{\begin{tabular}{|c|c|}
  \hline\ & \  \\
  \hfill  \textbf{Seifert manifold} \hfill &
\textbf{Layered Solid tori}  \\
\hline\ & \\
      $(\mathbb S^2,(3,1),(3,2),(4,-3))$ & $(3,1,-4), (3,2,-5), (4,-7,3)$\\
      $(\mathbb S^2,(2,1),(4,1),(4,-1))$ & $(2,1,-3), (4,1,-5), (4,-5,1)$\\
      $(\mathbb S^2,(2,1),(4,1),(5,-4))$ & $(2,1,-3), (4,1,-5), (5,-9,4)$\\
      $(\mathbb S^2,(3,1),(3,1),(3,1))$ & $(3,-2,-1), (3,1,-4), (3,1,-4)$\\
      $(\mathbb S^2,(3,1),(3,1),(4,-1))$ & $(3,1,-4), (3,1,-4), (4,-5,1)$\\
      $(\mathbb S^2,(2,1),(3,1),(7,-6))$ & $(2,1,-3), (3,1,-4), (7,-13,6)$\\
      $(\mathbb S^2,(3,1),(3,1),(4,-3))$ & $(3,1,-4), (3,1,-4), (4,-7,3)$\\
      $(\mathbb S^2,(2,1),(3,2),(6,-5))$ & $(2,1,-3), (3,2,-5), (6,-11,5)$\\
      $(\mathbb S^2,(3,1),(3,2),(5,-4))$ & $(3,1,-4), (3,2,-5), (5,-9,4)$\\
      $(\mathbb S^2,(2,1),(3,1),(6,-1))$ & $(2,1,-3), (3,1,-4), (6,-7,1)$\\
      $(\mathbb S^2,(2,1),(4,1),(5,-3))$ & $(2,1,-3), (4,1,-5), (5,-8,3)$\\
      $(\mathbb S^2,(2,1),(4,3),(5,-4))$ & $(2,1,-3), (4,3,-7), (5,-9,4)$\\
      $(\mathbb S^2,(2,1),(4,1),(6,-5))$ & $(2,1,-3), (4,1,-5), (6,-11,5)$\\
      $(\mathbb S^2,(3,1),(3,2),(3,-1))$ & $(3,1,-4), (3,2,-5), (3,-4,1)$\\
      $(\mathbb S^2,(2,1),(4,1),(4,1))$ & $(2,1,-3), (4,-3,1), (4,1,-5)$\\
      $(\mathbb S^2,(3,2),(4,1),(4,-3))$ & $(3,2,-5), (4,1,-5), (4,-7,3)$\\
      $(\mathbb S^2,(2,1),(4,1),(5,-1))$ & $(2,1,-3), (4,1,-5), (5,-6,1)$\\
      $(\mathbb S^2,(2,1),(5,1),(5,-4))$ & $(2,1,-3), (5,1,-6), (5,-9,4)$\\
      $(\mathbb S^2,(3,1),(3,1),(4,1))$ & $(3,1,-4), (3,1,-4), (4,-3,-1)$\\
      $(\mathbb S^2,(3,1),(4,1),(4,-1))$ & $(3,1,-4), (4,1,-5), (4,-5,1)$\\
      $(\mathbb S^2,(3,1),(3,1),(5,-1))$ & $(3,1,-4), (3,1,-4), (5,-6,1)$\\
      $(\mathbb S^2,(2,1),(3,1),(8,-7))$ & $(2,1,-3), (3,1,-4), (8,-15,7)$\\
      $(\mathbb S^2,(2,1),(3,1),(7,-5))$ & $(2,1,-3), (3,1,-4), (7,-12,5)$\\
      $(\mathbb S^2,(3,1),(3,1),(5,-4))$ & $(3,1,-4), (3,1,-4), (5,-9,4)$\\
\hline \end{tabular}}

\bigskip

The following Proposition and its  corollary solve the recognition
pro\-blem both for the crystallizations of $\textbf C^{42}_2$ and
for the 6-tuples of \cite{KMN}.

\begin{proposition}
There are exactly 78 genus two prime orientable 3-manifolds
admitting a coloured triangulation with at most 42 tetrahedra and
regular genus two. They are:
\begin{itemize}
\item [-] seventy-three Seifert manifolds\;\footnote{thirty-nine elliptic, four flat, ten with Nil and twenty with $\widetilde{SL}_2(\mathbb R)$ geometry};
\item [-] three Dehn-fillings (of the complement of link $6^3_1$)\;\footnote{two of these manifolds admit also a torus bundle structure with Sol geometry, the remaining one is hyperbolic};
\item [-] two non-geometric 
graph-manifolds;
\end{itemize}
\end{proposition}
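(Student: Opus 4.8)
The plan is to leverage the complete classification of the catalogue $\mathbf{C}^{42}_2$ produced by $\G$-class and to pin down, one class at a time, the homeomorphism type it represents, matching the result against the KMN-list. I would first fix the count: by Lemma \ref{somme or} and the preceding analysis, of the $175$ classes into which $\G$-class partitions $\mathbf{C}^{42}_2$, exactly $97$ represent genus-one manifolds or connected sums, leaving $78$ classes each representing a prime genus-two $3$-manifold. A comparison of codes gives a bijection between these $78$ classes and the $78$ $6$-tuples which \cite{KMN} proves to represent all distinct prime orientable genus-two $3$-manifolds triangulable with at most $42$ tetrahedra. Since \cite{KMN} already establishes that these $78$ manifolds are pairwise non-homeomorphic, it suffices to identify each geometrically; the word ``exactly'' and the distinctness are then inherited.

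Next I would dispose of the classes reducible to manifolds already in hand. Forty-eight of the $78$ are recognised by $\G$-class via the catalogues of order below $32$ (\cite{CC}), almost all Seifert spaces over $\mathbb S^2$ with three exceptional fibres. Of the remaining $30$, those with finite fundamental group are settled immediately: every finite group arising is a Milnor group, so the manifold is elliptic, hence Seifert. This leaves only the classes with infinite fundamental group, where the real work lies.

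For these, the central device is to realise each class as an explicit Seifert manifold and certify the match by dipole-move equivalence. Using the presentations in the KMN-list together with Proposition \ref{inf_fund_group}, I would rewrite each infinite fundamental group as $\pi_1$ of a fibration $(\mathbb S^2,(\alpha_1,\beta_1),(\alpha_2,\beta_2),(\alpha_3,\beta_3))$; for each such fibration the layered-solid-torus construction of Section \ref{Burton-Seifert}, with the parameters $(\alpha_i,\theta_i,\sigma_i)$ tabulated above, yields a coloured triangulation $\Gamma(f)$, which I would reduce by cancelling dipoles and switching $\rho$-pairs to a crystallization in the auxiliary list $Y$. Feeding $Y$ into $\G$-class alongside the still unidentified crystallizations and re-running the classification then checks that each unknown class and its candidate Seifert crystallization share a code after admissible sequences; by Proposition \ref{moves_vari}(i) this proves they represent the same manifold, identifying the class as the named Seifert space.

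The main obstacle is the small set of classes whose fundamental groups are \emph{not} of Seifert type: the three Dehn-fillings of the complement of the link $6^3_1$ and the two non-geometric graph-manifolds. For these Proposition \ref{inf_fund_group} gives no foothold, so the uniform Seifert argument fails and each must be handled separately --- recognising the hyperbolic filling through its geometric invariants and the $6^3_1$ surgery description, identifying the two Sol fillings as torus bundles, and exhibiting the JSJ splitting of the two graph-manifolds into Seifert blocks. Once these five are pinned down, the tally $73+3+2=78$ agrees with the count established above, and together with the inherited distinctness this completes the proof.
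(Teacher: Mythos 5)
Your overall architecture matches the paper's: the count of $78$ classes coming out of $\G$-class together with Lemma \ref{somme or}, the code-comparison bijection with the $78$ KMN $6$-tuples (from which distinctness is inherited), the Milnor-group identification of the classes with finite fundamental group, and the identification of the infinite-$\pi_1$ classes by rewriting their groups via Proposition \ref{inf_fund_group}, building the layered-solid-torus triangulations of Section \ref{Burton-Seifert}, reducing to crystallizations, and re-running $\G$-class to match codes. (You silently skip the three classes that the paper settles via \cite{BGR}, but that is minor bookkeeping inside the $30$ unidentified classes.)

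The genuine gap is your ``main obstacle''. You place the three Dehn-fillings of the $6^3_1$ complement and the two non-geometric graph-manifolds among the still-unidentified classes, and propose to handle them ``separately'' by geometric invariants, surgery descriptions and JSJ splittings --- but no actual argument is given, and it is not at all clear how one would carry out such recognitions within crystallization theory (certifying a hyperbolic filling ``through its geometric invariants'' is a program, not a proof). In the paper this case never arises: those five manifolds are among the $48$ classes that $\G$-class identifies automatically, because they are equivalent (by dipole moves, generalized dipole moves and $\rho$-pair switchings) to crystallizations with fewer than $32$ vertices, hence already known from the catalogues of \cite{CC}; the paper states this explicitly. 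As a consequence, the $16$ classes remaining after the $48+3+11$ \emph{all} have infinite fundamental groups of type (i)--(iii) of Proposition \ref{inf_fund_group} (a fact checked against Appendix A of \cite{KMN}), so the uniform Seifert argument covers everything that is left. Your version instead leaves the identification of precisely those five manifolds unproven, which is a real hole in the argument even though the rest of the plan is sound.
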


\begin{proof}
48 manifolds appeared already in catalogues $\mathcal C^{(2p)}$
with $p\leq 15$ and program $\G$-class proved that at least one of
their genus two crystallizations is equivalent (by dipole and
generalized dipole moves and $\rho$-pair switchings) to a
crystallization with less than 32 vertices. Among them there are
the three Dehn-fillings and the two non-geometric graph-manifolds.
Further three manifolds are listed in \cite{BGR}.
Of the remaining ones, 11 admit finite fundamental group 
and could be recognized through Milnor's list of groups. In all
cases, by the results in \cite{KMN}, the group identifies
univocally the manifold.

We remark once more that the remaining 16 classes represent
manifolds with infinite fundamental groups of the type (i), (ii)
or (iii) in Proposition \ref{inf_fund_group} (see Appendix A of
\cite{KMN}). Therefore, we added to the set $Y$, which we
described above, the crystallizations of the unknown classes and
we applied program $\G$-class to the resulting list. The output
results proved that the suspected identifications were true.
\end{proof}

Table 2 of Appendix A contains KMN-list of 6-tuples together with
their re\-pre\-sen\-ted manifolds according to the results
summarized in the above Proposition.

\section{Genus two non-orientable 3-manifolds}

$\G$-class, applied to $\widetilde {\textbf C}^{(42)}_2$, produced
nine classes, which were all recognized by the program by means of
the inserted catalogues $\widetilde {\mathcal C}^{(2p)}$ with
$1\leq p\leq 15$ (see (resp. \cite{BCrG})) and correspond to nine
distinct manifolds, including  $\mathbb S^1\tilde\times\mathbb
S^2$ (of genus one) and the connected sum $L(2,1) \# (\mathbb
S^1\tilde\times\mathbb S^2)$  . As a consequence we can state the
following Proposition\footnote{for notations see Appendix A}.

\begin{proposition} There exist exactly seven non-orientable prime
genus two 3-manifolds admitting a coloured triangulation with at
most 42 tetrahedra and regular genus two. They are

\begin{itemize}
 \item $\mathbb {RP}^2\times\mathbb S^1$
  \item the two flat manifolds $\mathbb E^3/Bb$ and $\mathbb
  E^3/Pna2_1$
  \item the three torus bundles $TB\begin{pmatrix} 0 & 1\\ 1 & -1\end{pmatrix}$,
  $TB\begin{pmatrix} 2 & 1\\ 1 & 0\end{pmatrix}$ and
  $TB\begin{pmatrix} 3 & 1\\ 1 & 0\end{pmatrix}$ with Sol geometry;
  \item the Seifert manifold $(\mathbb R \mathbb
  P^2;(2,1),(3,1))$ with geometry $\ \mathbb H^2\times\mathbb R$.
\end{itemize}

All these manifolds, excepting $\mathbb {RP}^2\times\mathbb S^1$,
also admit coloured triangulations of strictly higher genus with
30 or less tetrahedra.
\end{proposition}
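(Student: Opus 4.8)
The plan is to read the statement off the output of the classification algorithm, exactly as in the orientable case treated above, and then to separate the prime genus-two manifolds from the remaining two among the nine classes. First I would recall that the catalogue $\widetilde{\textbf C}^{(42)}_2=\bigcup_{p=1}^{21}\widetilde{\mathcal C}^{(2p)}_2$ is, by construction, a complete list of all rigid non-bipartite crystallizations of regular genus two with at most $42$ vertices; since $\G$ is non-bipartite exactly when $|K(\G)|$ is non-orientable, every non-orientable $3$-manifold admitting such a crystallization is represented somewhere in this catalogue, and hence falls into one of the nine classes produced by $\G$-class.

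Next I would justify that these nine classes correspond to exactly nine pairwise distinct manifolds. By Proposition \ref{moves_vari}, any two crystallizations lying in the same subclass $c_{i,k}$ represent the same $3$-manifold, so each class determines a well-defined manifold (up to connected sum with copies of $\mathbb S^1\tilde\times\mathbb S^2$, recorded by the function $h$). To identify them I would feed to $\G$-class, together with $\widetilde{\textbf C}^{(42)}_2$, the already analysed general catalogues $\widetilde{\mathcal C}^{(2p)}$ with $1\le p\le 15$, of order $\le 30$, whose manifolds were completely recognised in \cite{BCrG} and \cite{CA1}. The program then attaches a known name to every one of the nine classes; since the nine names turn out to be pairwise distinct, the nine classes represent nine pairwise non-homeomorphic non-orientable $3$-manifolds.

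It then remains to discard the two manifolds that are not prime of genus two. One class is identified as $\mathbb S^1\tilde\times\mathbb S^2$, which has regular genus one and is therefore excluded; a second is identified as the connected sum $L(2,1)\#(\mathbb S^1\tilde\times\mathbb S^2)$, which I would detect as non-prime by checking condition (\#) and reversing the connected-sum construction. The seven surviving classes yield the seven manifolds in the list, and their primeness, non-orientability and regular genus two are inherited from their identifications in the $\le 30$ catalogues; the explicit geometric descriptions (the flat manifolds $\mathbb E^3/Bb$ and $\mathbb E^3/Pna2_1$, the three Sol torus bundles, and the Seifert manifold $(\mathbb R\mathbb P^2;(2,1),(3,1))$ with $\mathbb H^2\times\mathbb R$ geometry) are read off from those same identifications. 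Finally, the closing assertion follows by inspecting, for each of the six manifolds other than $\mathbb{RP}^2\times\mathbb S^1$, the matching entry of the general catalogue $\widetilde{\mathcal C}^{(2p)}$ with $p\le 15$: each such entry is a crystallization of order $\le 30$ whose regular genus is strictly greater than two.

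The main obstacle I anticipate is not conceptual but lies in trusting the exhaustiveness of the generation step and the correctness of the move-based matching: I must be sure that Lins's inductive construction, modified so as to control the number of bicoloured cycles, really produces every rigid non-bipartite genus-two crystallization up to $42$ vertices, and that the fixed set $\mathcal S$ of admissible move-sequences is rich enough to connect each catalogue class to a recognised small crystallization. The genuinely delicate point is confirming that the seven surviving manifolds have regular genus exactly two, rather than one, and are genuinely prime; both facts rest on the prior complete classification of the $\le 30$ catalogues rather than on the present computation.
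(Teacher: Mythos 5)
Your proposal is correct and follows essentially the same route as the paper: the paper's (very terse) proof likewise rests on the exhaustive generation of $\widetilde{\textbf C}^{(42)}_2$, the partition by $\G$-class into nine classes, their recognition via the previously classified catalogues $\widetilde{\mathcal C}^{(2p)}$, $1\le p\le 15$, and the exclusion of the two classes representing $\mathbb S^1\tilde\times\mathbb S^2$ (genus one) and $L(2,1)\#(\mathbb S^1\tilde\times\mathbb S^2)$ (non-prime). Your spelled-out justifications (distinctness of the nine names, genus and primeness inherited from the $\le 30$ classification, and the final assertion read off from the $\le 30$ catalogue entries versus the $\ge 32$-vertex minimal genus-two crystallizations of Table 3) are exactly what the paper leaves implicit.
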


More precisely, in Table 3 of Appendix A we present the list of
the above manifolds according to the number of vertices of their
minimal genus two crystallization.

\appendix
\section{Appendix A}

Table 2 (resp. Table 3) presents the catalogue of Heegaard genus
two prime orientable (resp. non-orientable) 3-manifolds admitting
a crystallization (with regular genus two)
 with at most 42 vertices. The manifolds are
identified via their JSJ decomposition or fibering structure and,
possibly, via a further structure as quotient of $\mathbb S^3$ or
$\mathbb E^3$. The second and last column of Table 2 contain the
informations about the 6-tuple which represent the manifold in
KMN-list and its position in the same list.

As far as the identification of a manifold is concerned, the
following notations are used:

\begin{itemize}
\item[-]  $\mathbb S^3/G$ is the quotient space of $\mathbb S^3$
by the action of the group $G$; the involved groups are groups of
type
\begin{itemize}
\item [\ ] $Q_{4n}=<x,y\, |\, x^2=(xy)^2=y^n>$, ($n\geq 2$)
\item [\ ] $D_{2^k(2n+1)}=<x,y\, |\, x^{2^k}=1, y^{2n+1}=1, xyx^{-1}=y^{-1}>$, \ ($k\geq 3,\ n\geq 1$),
\item [\ ] $P_{24}=<x,y\, |\, x^2=(xy)^3=y^3,\ x^4=1>$,
\item [\ ] $P_{48}=<x,y\, |\, x^2=(xy)^3=y^4,\ x^4=1>$,
\item [\ ] $P_{120}=<x,y\, |\, x^2=(xy)^3=y^5,\ x^4=1>$,
\item [\ ] $P^\prime_{3^k8}=<x,y,z\, |\, x^2=(xy)^2=y^2,\ zxz^{-1}=y,\ zyz^{-1}=xy,\ z^{3^k}=1>$, ($k\geq 2$)
\end{itemize}
 or direct products of the above with cyclic groups $\mathbb Z_n$ ($n\in \mathbb Z^+$);
\item[-] $\mathbb E^3/G$ is the quotient space of $\mathbb E^3$
by the action of the group $G$; the notations for groups $G$ are
those of the International Tables for Crystallography (see also
\cite{V1} and \cite{V2}, where the alternative notations, used in
\cite{BCrG} and  \cite{CA1}, were introduced, too).
\item[-] as in section \ref{Burton-Seifert}, $(S, (\alpha_1, \beta_1), \ldots, (\alpha_n,
\beta_n))$ is the (orientable or non-orientable according to the
context) Seifert fibered space whose orbit space is the surface
$S$ and having $n$ exceptional fibers, with non-normalized
parameters $(\alpha_i, \beta_i),\ i = 1, \ldots, n$;
\item[-] for each matrix $A\in GL (2;
\mathbb Z)$,\ $TB(A)= (T \times I)/A$ \ is the torus bundle over
$\mathbb S^1$ with monodromy induced by $A$;
\item[-] $ H_1 \bigcup_A H_2$ is the graph manifold
obtained by gluing a Seifert manifold $H_1$, with $\partial
H_1\cong T$, and a Seifert manifold $H_2$, with $\partial H_2\cong
T$, along their boundary tori by means of the attaching map
associated to matrix $A$;
\item[-] following \cite{M3}, $Q_i(p,q)$
denotes the closed manifold obtained as Dehn fil\-ling with
parameters $(p,q)$ of the compact manifold $Q_i$, whose interior
is one of the 11 hyperbolic manifolds of finite volume with a
single cusp and complexity at most three (see \cite{CHW} and
\cite{M2}).
\end{itemize}

In Table 2 we wrote in italics the 6-tuples representing manifolds
which don't appear in former catalogues of crystallizations
(\cite{BCrG},\cite{BGR},\cite{CC}).

\bigskip

\centerline{\begin{tabular}{|c|c|c|c|}
  \hline\ & \ & \ & \\
  \hfill  \textbf{tetrahedra} &
\textbf{6-tuple} & \textbf{$3$-manifold} & \textbf{position}\\ \ &
\ & \  &
\textbf{in \cite{KMN}}\\
 \hline\ & \ & \ & \\
18 & $(3,3,3,2,2,2)$ & $\mathbb S^3/Q_8=(\mathbb S^2,(2,1),(2,1),(2,-1))$ & \textbf{P.6}\\
   \hline\ & \ & \ & \\
  22 & $(3,3,5,2,2,4)$ & $\mathbb S^3/Q_{12}=(\mathbb S^2,(2,1),(2,1),(3,-2))$ & \textbf{P.14}\\
   \hline\ & \ & \ &\\
  24 & $(4,4,4,1,1,1)$ & $\mathbb S^3/Q_{8}\times Z_3=(\mathbb S^2,(2,1),(2,1),(2,1))$ & \textbf{P.25}\\
     & $(4,4,4,1,1,5)$ & $\mathbb S^3/D_{24}=(\mathbb S^2,(2,1),(2,1),(3,-1))$ & \textbf{P.29}\\
     & $(4,4,4,3,3,3)$ & $\mathbb S^3/P_{24}=(\mathbb S^2,(2,1),(3,1),(3,-2))$ & \textbf{P.11}\\
     \hline\ & \ & \ & \\
  26 & $(3,3,7,2,2,6)$ & $\mathbb S^3/Q_{16}=(\mathbb S^2,(2,1),(2,1),(4,-3))$ & \textbf{P.7}\\
 \hline\ & \ & \ & \\
  28 & $(4,4,6,1,1,1)$ & $\mathbb S^3/D_{48}=(\mathbb S^2,(2,1),(2,1),(3,1))$ & \textbf{P.50}\\
     & $(4,4,6,1,1,7)$ & $\mathbb S^3/P^\prime_{72}=(\mathbb S^2,(2,1),(3,1),(3,-1))$ & \textbf{P.34}\\
     & $(4,4,6,1,5,1)$ & $\mathbb S^3/Q_{16}\times Z_3=(\mathbb S^2,(2,1),(2,1),(4,-1))$ & \textbf{P.26}\\
\hline
\end{tabular}}

\vskip 20pt

\rightline{\it (Table 2 continues...)}


\centerline{\begin{tabular}{|c|c|c|c|}
  \hline\ & \ & \ & \ \\
  \hfill  \textbf{tetrahedra} &
\textbf{6-tuple} & \textbf{$3$-manifold} & \textbf{position}\\ \ &
\  & \  & \textbf{in \cite{KMN}}\\
\hline\ & \ & \ & \\
  28 & $(4,4,6,3,3,5)$ & $\mathbb S^3/P_{48}=(\mathbb S^2,(2,1),(3,1),(4,-3))$ & \textbf{P.3}\\
\hline\ & \ & \ &\\
  30 & $(3,3,9,2,2,8)$ & $\mathbb S^3/Q_{20}=(\mathbb S^2,(2,1),(2,1),(5,-4))$ & \textbf{P.15}\\
     & $(5,5,5,2,2,2)$ & $\mathbb E^3/P2_12_12_1=(\mathbb {RP}^2,(2,1),(2,-1))$ & \textbf{P.18}\\
     & $(5,5,5,4,4,4)$ & $\mathbb S^3/P_{120}=(\mathbb S^2,(2,1),(3,1),(5,-4))$ & \textbf{P.1}\\
\hline\ & \ & \ & \\
  32 & $(4,4,8,1,1,1)$ & $\mathbb S^3/Q_{16}\times Z_5=(\mathbb S^2,(2,1),(2,1),(4,1))$ & \textbf{P.40}\\
     & $(4,4,8,1,1,9)$ & $\mathbb S^3/P_{48}\times Z_5=(\mathbb S^2,(2,1),(3,2),(4,-3))$ & \textbf{P.38}\\
     & $(4,4,8,1,5,1)$ & $\mathbb S^3/D_{80}=(\mathbb S^2,(2,1),(2,1),(5,-1))$ & \textbf{P.51}\\
     & $(4,6,6,1,1,1)$ & $\mathbb S^3/P_{24}\times Z_7=(\mathbb S^2,(2,1),(3,1),(3,1))$ & \textbf{P.59}\\
     & $(4,6,6,1,1,9)$ & $TB\begin{pmatrix} -1 & 1\\ -1 & 0\end{pmatrix}=(\mathbb S^2,(3,1),(3,1),(3,-1))$ & \textbf{P.13}\\
     & $(4,6,6,1,7,1)$ & $\mathbb S^3/P_{48}\times Z_7=(\mathbb S^2,(2,1),(3,1),(4,-1))$ & \textbf{P.46}\\
     & $(4,6,6,5,5,3)$ & $\mathbb E^3/P4_1=TB\begin{pmatrix} 0 & 1\\ -1 & 0\end{pmatrix}=(\mathbb {S}^2,(2,1),(4,1),(4,-3))$ &
     \textbf{P.74}\\
\hline\ & \ & \ & \\
   34 & $(3,3,11,2,2,4)$ & $\mathbb S^3/D_{40}=(\mathbb S^2,(2,1),(2,1),(5,-3))$ & \textbf{P.30}\\
      & $(3,3,11,2,2,10)$ & $\mathbb S^3/Q_{24}=(\mathbb S^2,(2,1),(2,1),(6,-5))$ & \textbf{P.8}\\
      & $(3,7,7,2,2,2)$ & $\mathbb S^3/P_{24}\times Z_5=(\mathbb S^2,(2,1),(3,2),(3,-1))$ & \textbf{P.48}\\
      & $(5,5,7,2,4,2)$ & $(\mathbb {RP}^2,(2,1),(2,1))$ & \textbf{P.19}\\
      & $(5,5,7,2,6,6)$ & $\mathbb E^3/P3_1=TB\begin{pmatrix} 0 & 1\\ -1 & -1\end{pmatrix}=(\mathbb S^2,(3,1),(3,1),(3,-2))$ & \textbf{P.75}\\
      & $(5,5,7,4,4,6)$ & $\mathbb E^3/P6_1=TB\begin{pmatrix} 1 & -1\\ 1 & 0\end{pmatrix}=(\mathbb S^2,(2,1),(3,1),(6,-5))$ & \textbf{P.72}\\
    \hline\end{tabular}}

\vskip 20pt

\rightline{\it (Table 2 continues...)}


\centerline{\begin{tabular}{|c|c|c|c|}
  \hline\ & \ & \ & \\
  \hfill  \textbf{tetrahedra} &
\textbf{6-tuple} & \textbf{$3$-manifold} & \textbf{position}\\ &
 &  &
\textbf{in \cite{KMN}}\\
\hline\ & \ & \ & \\
  36 & ${\it (4,4,10,1,1,1)}$ & $\mathbb S^3/D_{40}\times Z_3=(\mathbb S^2,(2,1),(2,1),(5,1))$ & \textbf{P.63}\\
     & $(4,4,10,1,1,7)$ & $\mathbb S^3/Q_{12}\times Z_5=(\mathbb S^2,(2,1),(2,1),(3,2))$ & \textbf{P.57}\\
     & ${\it (4,4,10,1,1,11)}$ & $\mathbb S^3/P_{120}\times Z_{11}=(\mathbb S^2,(2,1),(3,2),(5,-4))$ & \textbf{P.42}\\
     & ${\it (4,4,10,1,5,1)}$ & $\mathbb S^3/Q_{24}\times Z_5=(\mathbb S^2,(2,1),(2,1),(6,-1))$ & \textbf{P.41}\\
     & $(4,4,10,1,5,7)$ & $\mathbb S^3/Q_{20}\times Z_3=(\mathbb S^2,(2,1),(2,1),(5,-2))$ & \textbf{P.43}\\
     & $(4,4,10,3,3,3)$ & $\mathbb S^3/P_{120}\times Z_7=(\mathbb S^2,(2,1),(3,1),(5,-3))$ & \textbf{P.28}\\
     & ${\it(4,6,8,1,1,1)}$ & $\mathbb S^3/P_{48}\times Z_{13}=(\mathbb S^2,(2,1),(3,1),(4,1))$ & \textbf{P.68}\\
     & ${\it(4,6,8,1,1,11)}$ & $(\mathbb S^2,(3,1),(3,2),(4,-3))$ & \textbf{P.35}\\
     & ${\it(4,6,8,1,7,1)}$ & $\mathbb S^3/P_{120}\times Z_{19}=(\mathbb S^2,(2,1),(3,1),(5,-1))$ & \textbf{P.56}\\
     & ${\it(4,6,8,3,9,13)}$ & $(\mathbb S^2,(2,1),(4,1),(4,-1))$ & \textbf{P.33}\\
     & $(4,6,8,5,5,11)$ & $(\mathbb S^2,(2,1),(4,1),(5,-4))$ & \textbf{P.4}\\
     & ${\it(6,6,6,1,1,1)}$ & $(\mathbb S^2,(3,1),(3,1),(3,1))$ & \textbf{P.37}\\
     & ${\it(6,6,6,1,1,9)}$ & $(\mathbb S^2,(3,1),(3,1),(4,-1))$ & \textbf{P.49}\\
     & $(6,6,6,1,7,7)$ & $TB\begin{pmatrix} -1 & 0\\ -1 & -1\end{pmatrix}=(K,(1,1))$ & \textbf{P.76}\\
     & $(6,6,6,5,5,5)$ & $TB\begin{pmatrix} 1 & 0\\ 1 & 1\end{pmatrix}=(T,(1,1))$ & \textbf{P.78}\\
\hline\ & \ & \ & \\
  38 & ${\it(3,3,13,2,2,12)}$ & $\mathbb S^3/Q_{28}=(\mathbb S^2,(2,1),(2,1),(7,-6))$ & \textbf{P.16}\\& $(5,5,9,2,2,2)$ & $(\mathbb {RP}^2,(2,1),(3,-1))$ & \textbf{P.66}\\
     & $(5,5,9,4,4,8)$ & $(\mathbb S^2,(2,1),(3,1),(7,-6))$ & \textbf{P.2}\\
     & $(5,7,7,4,6,12)$ & $(\mathbb S^2,(3,1),(3,1),(4,-3))$ & \textbf{P.12}\\
     \hline\end{tabular}}

\vskip 30pt

\rightline{\it (Table 2 continues...)}


\centerline{\begin{tabular}{|c|c|c|c|}
  \hline\ & \ & \ & \\
  \hfill  \textbf{tetrahedra} &
\textbf{6-tuple} & \textbf{$3$-manifold} & \textbf{position}\\ &
 &  &
\textbf{in \cite{KMN}}\\
\hline\ & \ & \ & \\
  40 & ${\it(4,4,12,1,1,1)}$ & $\mathbb S^3/Q_{24}\times Z_7=(\mathbb S^2,(2,1),(2,1),(6,1))$ & \textbf{P.47}\\
     & $(4,4,12,1,1,5)$ & $\mathbb S^3/Q_{8}\times Z_5=(\mathbb S^2,(2,1),(2,1),(2,3))$ & \textbf{P.39}\\& ${\it(4,4,12,1,1,13)}$ & $(\mathbb S^2,(2,1),(3,2),(6,-5))$ & \textbf{P.44}\\
     & ${\it(4,4,12,1,5,1)}$ & $\mathbb S^3/D_{56}\times Z_3=(\mathbb S^2,(2,1),(2,1),(7,-1))$ & \textbf{P.64}\\
     & ${\it(4,6,10,1,1,1)}$ & $\mathbb S^3/P_{120}\times Z_{31}=(\mathbb S^2,(2,1),(3,1),(5,1))$ & \textbf{P.70}\\
     & ${\it(4,6,10,1,1,13)}$ & $(\mathbb S^2,(3,1),(3,2),(5,-4))$ & \textbf{P.36}\\
     & ${\it(4,6,10,1,7,1)}$ & $(\mathbb S^2,(2,1),(3,1),(6,-1))$ & \textbf{P.65}\\
     & $(4,6,10,3,5,3)$ & $(\mathbb S^2,(2,1),(4,1),(5,-3))$ & \textbf{P.23}\\
     & ${\it(4,6,10,3,9,15)}$ & $(\mathbb S^2,(2,1),(4,3),(5,-4))$ & \textbf{P.55}\\
     & $(4,6,10,5,1,1)$ & $\mathbb S^3/P_{48}\times Z_{11}=(\mathbb S^2,(2,1),(3,2),(4,-1))$ & \textbf{P.61}\\
     & ${\it(4,6,10,5,5,13)}$ & $(\mathbb S^2,(2,1),(4,1),(6,-5))$ & \textbf{P.10}\\
     & $(4,6,10,5,9,3)$ & $(\mathbb S^2,(3,1),(3,2),(3,-1))$ & \textbf{P.27}\\
     & ${\it(4,6,10,7,1,1)}$ & $\mathbb S^3/P^\prime_{216}=(\mathbb S^2,(2,1),(3,1),(3,2))$ & \textbf{P.69}\\
     & $(4,6,10,7,3,15)$ & $\mathbb S^3/P_{120}\times Z_{13}=(\mathbb S^2,(2,1),(3,1),(5,-2))$ & \textbf{P.45}\\
     & ${\it(4,8,8,1,1,1)}$ & $(\mathbb S^2,(2,1),(4,1),(4,1))$ & \textbf{P.53}\\
     & ${\it(4,8,8,1,1,13)}$ & $(\mathbb S^2,(3,2),(4,1),(4,-3))$ & \textbf{P.32}\\
     & ${\it(4,8,8,1,9,1)}$ & $(\mathbb S^2,(2,1),(4,1),(5,-1))$ & \textbf{P.62}\\
     & $(4,8,8,5,5,13)$ & $(\mathbb S^2,(2,1),(5,1),(5,-4))$ & \textbf{P.20}\\
     & ${\it(6,6,8,1,1,1)}$ & $(\mathbb S^2,(3,1),(3,1),(4,1))$ & \textbf{P.71}\\
     & ${\it(6,6,8,1,1,11)}$ & $(\mathbb S^2,(3,1),(4,1),(4,-1))$ & \textbf{P.52}\\
     & ${\it(6,6,8,1,9,1)}$ & $(\mathbb S^2,(3,1),(3,1),(5,-1))$ & \textbf{P.60}\\
     \hline\end{tabular}}

\vskip 20pt

\rightline{\it (Table 2 continues...)}


\centerline{\begin{tabular}{|c|c|c|c|}
  \hline\ & \ & \ & \\
  \hfill  \textbf{tetrahedra} &
\textbf{6-tuple} & \textbf{$3$-manifold} & \textbf{position}\\ &
 &  &
\textbf{in \cite{KMN}}\\
 \hline\ & \ & \ & \\
  40 & $(6,6,8,5,5,7)$ & $TB\begin{pmatrix} 0 & 1\\ -1 & 3\end{pmatrix}=Q_2(0,1)$ & \textbf{P.73}\\
     & $(6,6,8,5,11,7)$ & $TB\begin{pmatrix} 0 & 1\\ -1 & -3\end{pmatrix}=Q_1(1,1)$ & \textbf{P.77}\\
\hline\ & \ & \ & \\
  42 & $(3,3,15,2,2,6)$ & $\mathbb S^3/D_{56}=(\mathbb S^2,(2,1),(2,1),(7,-5))$ & \textbf{P.31}\\
     & ${\it(3,3,15,2,2,14)}$ & $\mathbb S^3/Q_{32}=(\mathbb S^2,(2,1),(2,1),(8,-7))$ & \textbf{P.9}\\
     & $(3,7,11,4,2,2)$ & $\mathbb S^3/P_{120}\times Z_{17}=(\mathbb S^2,(2,1),(3,2),(5,-3))$ & \textbf{P.54}\\
     & $(5,5,11,2,4,2)$ & $(\mathbb {RP}^2,(2,1),(3,1))$ & \textbf{P.67}\\
     & $(5,5,11,4,4,10)$ & $(\mathbb S^2,(2,1),(3,1),(8,-7))$ & \textbf{P.5}\\
     & $(5,5,11,4,8,4)$ & $(\mathbb S^2,(2,1),(3,1),(7,-5))$ & \textbf{P.21}\\
     & $(5,7,9,2,4,4)$ & $(\mathbb D,(2,1),(2,-3))\bigcup_{\begin{pmatrix}0 & 1\\1 & 0\end{pmatrix}}(\mathbb D,(2,1),(3,-2))$ & \textbf{P.58}\\
     & ${\it(5,7,9,4,6,14)}$ & $(\mathbb S^2,(3,1),(3,1),(5,-4))$ & \textbf{P.24}\\
     & $(7,7,7,2,2,2)$ & $Q_1(2,-3)$ & \textbf{P.22}\\
     & $(7,7,7,2,6,10)$ & $(\mathbb D,(2,1),(2,1))\bigcup_{\begin{pmatrix}0 & 1\\1 & 0\end{pmatrix}}(\mathbb D,(2,1),(3,1))$ & \textbf{P.17}\\
\hline \end{tabular}}

\vskip 30pt

\centerline{\bf TABLE 2:  Prime genus two 3-manifolds represented
by crystallizations of $\textbf C_2^{(42)}$}

\bigskip

 \centerline{ \begin{tabular}{|c|c|}
  \hline
  \hfill  \textbf{tetrahedra} \hfill &
\textbf{$3$-manifold }  \\
 \hline \ & \ \\
  16 & $\mathbb {RP}^2\times\mathbb S^1$ \\
  \hline \ & \ \\
  32 & $\mathbb E^3 /Bb=TB\begin{pmatrix} 0 & 1\\ 1 & 0\end{pmatrix}$
 \\ \ & \ \\
 & $\mathbb E^3 /Pna2_1=(\mathbb R \mathbb P^2;
 (2,1),(2,1))$  \\ \ & \
 \\\hline \ & \ \\
 34 &  $TB\begin{pmatrix} 0 & 1\\ 1 & -1\end{pmatrix}$ \\ \ & \ \\
 \hline \ & \ \\ 36 & $TB\begin{pmatrix} 2 & 1\\ 1 &
0\end{pmatrix}$
 \\ \ & \  \\
 \hline \ & \ \\ 40  & $TB\begin{pmatrix} 3 & 1\\ 1 & 0\end{pmatrix}$\\ \ & \ \\
 & ($\mathbb R \mathbb P^2;
 (2,1),(3,1)$)
\\ \ & \ \\
\hline \end{tabular}}

\bigskip
\centerline{\bf TABLE 3:  Prime genus two 3-manifolds represented
by crystallizations of $\tilde{\textbf C_2}^{(42)}$}

\bigskip

\end{document}